\newcommand{\h}{\mathfrak{h}}
\newcommand{\Z}{\mathbb{Z}}
\newcommand{\C}{\mathbb{C}}
\newcommand{\M}{\mathcal{M}}
\newcommand{\Dcal}{\mathcal{D}}
\newcommand{\Loc}{\operatorname{Loc}}
\newcommand{\OCat}{\mathcal{O}}
\newcommand{\Hom}{\operatorname{Hom}}
\newcommand{\Ext}{\operatorname{Ext}}
\newtheorem{Thm}{Theorem}[section]
\newtheorem{Prop}[Thm]{Proposition}
\newtheorem{Cor}[Thm]{Corollary}
\newtheorem{Lem}[Thm]{Lemma}
\theoremstyle{definition}
\newtheorem{Rem}[Thm]{Remark}
\newtheorem{Conj}[Thm]{Conjecture}
\numberwithin{equation}{section}
\address{Department
of Mathematics, Northeastern University, Boston MA 02115 USA}
\email{i.loseu@neu.edu}
\title{Abelian localization for cyclotomic Cherednik algebras}
\author{Ivan Losev}
\begin{document}
\begin{abstract}
In this paper we prove the abelian localization theorem for modules over cyclotomic Rational Cherednik algebras.
\end{abstract}
\maketitle
\section{Introduction}
The Beilinson-Bernstein theorem proved in \cite{BB} is one of the most fundamental results in the representation theory of semisimple
Lie algebras. The theorem relates two categories. The first one is the  module category of $\mathcal{U}_\lambda$, the central reduction
of the universal enveloping algebra $U(\mathfrak{g})$ corresponding to $\lambda\in \mathfrak{h}^*$, where $\mathfrak{g}$
is a semisimple Lie algebra over $\C$ and $\mathfrak{h}\subset \mathfrak{g}$ is a Cartan subalgebra. The second category is the category
of quasi-coherent (sheaves of) modules over the sheaf $\mathcal{D}_{G/B}^\lambda$ of twisted differential operators on $G/B$, the flag variety of $\mathfrak{g}$. There are two natural functors between the two categories, the global section functor $\Gamma:\mathcal{D}_{G/B}^\lambda\operatorname{-mod}\rightarrow \mathcal{U}_{\lambda}\operatorname{-mod}$ and the localization
functor $\Loc: \mathcal{U}_{\lambda}\operatorname{-mod}\rightarrow \mathcal{D}_{G/B}^\lambda\operatorname{-mod}$.
The localization theorem states that the functors are mutually (quasi-)inverse equivalences if and only if $\langle\lambda+\rho,\alpha^\vee\rangle\not \in \Z_{\leqslant 0}$ for any positive coroot $\alpha^\vee$. Here, as usual,
$\rho$ is half the sum of all positive roots.

In the last 10 years there was a renewed interest to localization results in a wider representation theoretic context.
Namely, we can view $\mathcal{D}^\lambda_{G/B}$ as a quantization of the cotangent bundle $T^*(G/B)$ and $\mathcal{U}_\lambda$
as a quantization of the nilpotent cone $\mathcal{N}\subset \mathfrak{g}^*$. We have the Springer resolution morphism
$T^*(G/B)\rightarrow \mathcal{N}$. Now given any conical singular Poisson variety $X_0$ and its conical symplectic resolution
of singularities $X$, we can define quantizations $\mathcal{A}_\lambda$ of $X$ for $\lambda\in H^2_{DR}(X)$. They are filtered sheaves of algebras
in the conical topology on $X$, see \cite{BPW} for a related but different formalism. Then the global sections algebra $A_\lambda:=\Gamma(\mathcal{A}_\lambda)$ is a quantization of $X_0$ and one can ask for which $\lambda$ the functor
$\Gamma:\mathcal{A}_\lambda\rightarrow A_\lambda$ is an equivalence of categories.

The answer is known in some cases. Historically, the first case beyond the cotangent bundles considered
was when $X$ is the Hilbert scheme $\operatorname{Hilb}_n(\C^2)$, see \cite{GS}. The algebra $A_\lambda$
in this case is the spherical subalgebra in the type A Rational Cherednik algebra $H_\lambda$.
In {\it loc.cit.},  quantizations of $X$ were described using a different setting -- that of $\Z$-algebras.
Later Kashiwara and Rouquier, \cite{kashrouq}, proved the localization theorem using the setting of $\mathcal{W}$-algebras
that is more or less equivalent to what was described in the previous paragraph. Recently, it was shown
in \cite{BPW} that the two settings are equivalent.

Subsequently, more cases were worked out. In \cite{BPW}, it was shown that the localization theorem holds for $\lambda$
provided it is ``large enough'' but their approach does not allow to find a precise description.
The case when $X$ is a Slodowy variety (and $A_\lambda$ is the central reduction of a finite W-algebra) was fully solved in \cite{DK} (and earlier in \cite{Ginzburg_HC} in the $\Z$-algebra setting, a special case of
minimal resolutions of Kleinian singularities was settled yet earlier in \cite{Boyarchenko}). Some partial results in the case when
$X$ is a hypertoric variety were obtained in \cite{BK}. The case when $X$ is  a more general Hamiltonian reduction
(the most interesting case here is that of Nakajima quiver varieties; $\operatorname{Hilb}_n(\C^2)$, the Slodowy
varieties of type A, as well as the minimal resolutions of Kleinian singularities are special cases of quiver varieties)
was treated in \cite{MN,MN_ab}. In \cite{MN}
it was shown that $R\Gamma$ is a derived equivalence if and only if the algebra $A_\lambda$ has finite homological dimension
(such parameters are called {\it regular}, non-regular parameters are called {\it singular}).
Further, in \cite{MN_ab}, a sufficient condition on $\lambda$ for $\Gamma$
to be exact was found. This condition does not seem to be optimal. Together with Bezrukavnikov,
the author stated \cite[Conjecture 9.2]{BL} that describes the singular locus and the locus
whether the abelian localization fails. Both loci are unions of hyperplanes. According to that
conjecture, the abelian localization can hold even outside the locus determined by McGerty and Nevins.

The goal of this paper is to generalize the results of \cite{GS,kashrouq} to more general Cherednik algebras.
Namely, we will prove an abelian
localization theorem for  cyclotomic Rational Cherednik algebras. Those correspond to
groups $W=G(\ell,1,n):=\mathfrak{S}_n\ltimes (\Z/\ell \Z)^n$ (type $A$ Cherednik algebras correspond to
$\ell=1$). Also the spherical subalgebras arise as $A_\lambda$ for symplectic resolutions of $\C^{2n}/W$.
Our result should imply \cite[Conjecture 9.2]{BL} but we are not going to elaborate on that in
the present paper as the reduction involves some combinatorics.

There are basically three reasons which make this case more tractable than the general one. First of all,
one can deal with the whole Cherednik algebra  (that quantizes the skew-group ring $\C[\C^{2n}]\#W$ instead of
$\C[\C^{2n}]^W$) to be denoted by $H_p$ (above we have used $\lambda$ for the parameter, but we will use $p$
from now on). There is a sheaf analog  $\mathcal{H}_p$ of $H_p$, the sheaf of endomorphisms of the right $\mathcal{A}_p$-module
quantizing a {\it Procesi bundle} $\mathcal{P}$. One advantage of dealing with full Cherednik algebras is that the functor $R\Gamma$
is always an equivalence, \cite{GL}. The second reason is that both for $\mathcal{A}_p$ and for $H_p$
there are so called categories $\mathcal{O}$ that have highest weight structure. It is this structure and an observation
that one can describe Verma modules using Procesi bundles (made in \cite{VV_proof}) that makes the case of cyclotomic
Rational Cherednik algebras tractable. Finally, the singular locus here is known, essentially due to \cite{DG}.

In fact, categories $\mathcal{O}$ make sense whenever there is a Hamiltonian $\C^\times$-action with finitely many fixed points
on $X$, see \cite{BLPW}. It seems that in this case proving localization theorems should be easier than in general. We make
some speculations on how to extend techniques used in the present paper to a more general case. In fact, in a subsequent
paper we plan to prove the derived and abelian localization theorems in another important special case: when $X$
is the Gieseker moduli space.

The paper is organized as follows. In Section \ref{S:prelim}, we briefly recall the construction of symplectic resolutions
of $\C^{2n}/W$, of the Cherednik algebra $H_p$, its spherical subalgebra $A_p=eH_pe$,
as well as of their local versions $\mathcal{H}_p,\mathcal{A}_p$. Next, we recall
definitions of categories O, $O_p$ for $H_p$ and $\mathcal{O}_p$ for $\mathcal{H}_p$ and provide some details
on the highest weight structures on these categories. Then we recall a derived localization theorem from
\cite{GL} and describe the singular parameters. Finally, in \ref{SS:main_res} we state the main result
of this paper, Theorem \ref{Thm:equiv}, that gives a sufficient condition for $\Gamma: \mathcal{H}_p\operatorname{-mod}\rightarrow
H_p\operatorname{-mod}$ to be a category equivalence. Then we state a corollary that says, in particular, that
for any $p$ there is a resolution such that the abelian localization holds in our setting.

These claims are proved in Section \ref{S:proof}. First, following an idea that was already used in  \cite{VV_proof}, we relate the local
analogs of Verma modules in the category $\mathcal{O}_p$ to the standard objects in that category. Based on this,
we prove that the localization holds for categories $\mathcal{O}$. Then we deduce the localization theorem for the
full categories of modules. Finally, we prove a corollary of Theorem \ref{Thm:equiv}.

In Section \ref{S:open} we speculate on how to generalize our results to more general symplectic resolutions
that admit a Hamiltonian torus action with finitely many fixed points.

{\bf Acknowledgements}: I am supported
by the NSF under Grant  DMS-1161584.

\section{Preliminaries}\label{S:prelim}
\subsection{Resolutions}\label{SS:res}
Here we are going to recall a construction of symplectic resolutions of $\C^{2n}/W$ via Hamiltonian reduction.
Recall that $G(\ell,1,n):=\mathfrak{S}_n\ltimes (\Z/\ell\Z)$ acts naturally on $\C^n$ and hence also on
$\C^n\oplus (\C^n)^*=\C^{2n}$.

Namely, let $Q$ be a cyclic quiver with $\ell$ vertices, in other words, the Dynkin diagram of  affine type $\tilde{A}_{\ell-1}$
with some orientation. We number the vertices of $Q$ by numbers from $0$ to $\ell-1$ cyclically.
Let $\delta=(1,\ldots,1)\in \C^{Q_0}$ be the indecomposable imaginary root. We consider the space
$R=\operatorname{Rep}(n\delta,\epsilon_0)$ of representations of $Q$ of dimension $n\delta$ with one-dimensional framing at
vertex $0$. The group $G:=\prod_{i\in Q_0}\operatorname{GL}(n\delta_i)$ naturally acts on this space. Fix a stability
condition $\theta\in \Z^{Q_0}$ and consider the Nakajima quiver variety $\M^\theta:=\mu^{-1}(0)^{\theta-ss}/\!/G$, where
$\mu: T^*R\rightarrow \mathfrak{g}^*$ is a natural moment map.

The variety $\M^0$ is naturally identified with $\h\oplus\h^*/W$, see, for example, \cite{GG}.
A variety $\M^\theta$ with $\theta$ generic is smooth and symplectic.
The generic locus can be described explicitly, see \cite[Theorem 2.8]{Nakajima}. Namely, $\theta$ is generic if and only if $\sum_{i=0}^{\ell-1}\theta_i\neq 0$ and $\theta_i-\theta_j\neq m\sum_{k=0}^{\ell-1}\theta_k$ for all different
$i,j\in \{1,\ldots,\ell-1\}$ and all integers  $m$ with $|m|<n$. 

The variety $\M^\theta$ comes with a natural projective morphism $\M^\theta\rightarrow \M^0=\h\oplus \h^*/W$ that is a resolution of singularities for generic $\theta$. The latter follows, for example,  from considerations in \cite{GG}.

Let $T_1,T_2$ denote one-dimensional tori $\C^\times$.
We have a $T_1\times T_2$-action on $\M^\theta,\M^0$ that comes from the following action on $T^*R=R\oplus R^*$:
$(t_1,t_2)(r,\alpha)=(t_1^{-1}t_2^{-1} r, t_1^{-1}t_2\alpha), r\in R,\alpha\in R^*$. The morphism $\M^\theta\rightarrow \M^0$
is clearly equivariant. Under the identification of $\M^0$ with $(\h\oplus \h^*)/W$, the action of $T_1\times T_2$
coincides with an action induced from $(t_1,t_2)(y,x)=(t_1^{-1}t_2^{-1}y, t_1^{-1}t_2 x)$.

We note that the fixed points of the $T_2$-action on $\M^\theta$ are all isolated. The set $(\M^\theta)^{T_2}$
is identified with the set of irreducible $W$-modules as explained in \cite{Gordon}. The latter is in a natural
bijection with the set $\operatorname{P}_\ell(n)$ of the $\ell$-multipartitions of $n$, where we use the numbering conventions of \cite{GL}.

\subsection{Algebras and sheaves}\label{SS:alg}
Our convention on parameters for a cyclotomic Rational Cherednik algebra is as in \cite{GL}. Namely, we have parameters $p=(\kappa,h_0,\ldots,h_{\ell-1})$, where the numbers $h_0,\ldots,h_{\ell-1}$ are defined up to a common summand.

Then we can form the algebra $H_p$, a Rational Cherednik algebra for the the group $W$,
that is a quotient of $T(\h\oplus \h^*)\#W$ by certain relations, here $\h=\C^n$ is the reflection representation of $W$.
Inside $H_p$ we have the averaging idempotent $e=\frac{1}{|W|}\sum_{w\in W}w$. Then we can consider the spherical subalgebra
$eH_pe$.

Both algebras $H_p,eH_pe$ are filtered: their associated graded algebras are $\C[\h\oplus \h^*]\#W$ and $\C[\h\oplus \h^*]^W$, respectively.
Now fix a generic stability condition $\theta=(\theta_0,\ldots,\theta_{\ell-1})$ and form the corresponding variety
$\M^\theta$. As in \cite{quant}, we have a sheaf (in conical topology) of filtered algebras $\mathcal{A}_p$ that quantizes
the structure sheaf $\mathcal{O}_{\M^\theta}$ (meaning that the $T_1$-equivariant Poisson sheaves $\operatorname{gr}\mathcal{A}_p$ and $\mathcal{O}_{\M^\theta}$ are identified) and such that $\Gamma(\mathcal{A}_p)=eH_pe$ (and all higher cohomology of $\mathcal{A}_p$ automatically vanish). We remark that the sheaf $\mathcal{A}_p$ is equivariant with respect to $T_2$ (and the action of $T_2$ on $\mathcal{A}_p$ is Hamiltonian).

We also have a local version of $H_p$. Let $\mathcal{P}$ be a distinguished Procesi bundle on $\M^\theta$ (see \cite{VV_proof}
for the definition of that, this is the same sheaf as used in \cite{BF}).
Then we can deform $\mathcal{P}$ to a right $\Dcal_p$-module. Let $\mathcal{H}_p$ denote the endomorphism
sheaf of that module. Then $\Gamma(\mathcal{H}_p)=H_p$ and the higher cohomology of $\mathcal{H}_p$ vanish.

\subsection{Categories}\label{SS:cat}
We consider the categories $\mathcal{O}$ over the algebra $H_p$ (to be denoted by $O_p$) and over $\mathcal{H}_p$
(to be denoted by $\mathcal{O}_p$).

We remark that the sheaves of algebras  $\mathcal{H}_p$ and $\mathcal{A}_p$ are Morita
equivalent. Under this Morita equivalence, the category $\mathcal{O}_p$
corresponds to the category $\mathcal{O}$ for $\mathcal{A}_p$ defined in \cite{BLPW}.
Recall how the latter is defined.
Let $Y$ stands for the contracting locus for the $T_2$-action: the set of all points $y\in \M^\theta$
such that $\lim_{t\rightarrow 0}t.y$ exists. The category $\mathcal{O}_p$  consists of all
$\mathcal{A}_p$-modules that are supported on $Y$ that have a global good filtration (``good'' means
that the associated graded is a coherent sheaf) that is stable under the quantum Hamiltonian for
the $T_2$-action on $\mathcal{A}_p$.

The category $\mathcal{O}_p$ is highest weight. Its irreducible (and standard) objects are
parameterized by the $\C^\times$-fixed points. Let $z_\lambda$ denote the  point corresponding to an
$\ell$-multipartition $\lambda$. Let $Y_\lambda$
denote the contracting component of $z_\lambda$. We define the geometric order on $\operatorname{P}_\ell(n)$
by setting $\lambda\leqslant \mu$ if $z_\lambda\in \overline{Y}_\mu$ (we write $\lambda\leqslant^\theta \mu$ if we need
to indicate that this is the geometric order defined for the stability condition $\theta$).
The claim that $\mathcal{O}_p$ is a highest weight category with respect to this order was established in \cite{BLPW}.

By definition, the category $O_p$  consists of all $H_p$-modules that are finitely generated over $S(\h^*)$
and where the action of $\h$ is locally nilpotent. Here is an example of an object in this category:  for a multipartition $\lambda$ thought as an
irreducible $W$-module we can consider the Verma module $\Delta_p(\lambda):=\operatorname{Ind}_{S(\h)\#W}^{H_p}\lambda$.
It was checked in \cite{GGOR} that $O_p$ is a highest weight category with standard objects $\Delta_p(\lambda)$,
where the order is as follows. There is an Euler element $h\in H_p$ having the property that $[h,x]=x, [h,y]=-y, [h,w]=0$
for all $x\in \h^*, y\in \h, w\in W$. This element has the form $\sum_{i=1}x_i y_i+ C$, where $C$ is a central element
in $\C W$ (depending on $p$). Define $c_p(\lambda)$ to be the scalar by which $C$ acts on $\lambda$. Define the ordering
$\leqslant^p_c$ on $\operatorname{P}_\ell(n)$ as follows: $\lambda\leqslant^p_c \mu$ if $\lambda=\mu$ or $c_p(\lambda)-c_p(\mu)\in \Z_{>0}$.
Then $O_p$ is a highest weight category with respect to $\leqslant_c^p$.

In the study of the category $\mathcal{O}_p$, there are two cases that need to be treated differently: $\kappa=0$ and $\kappa\neq 0$.
Consider (a more interesting) case $\kappa\neq 0$.
As was shown in \cite{Griffeth}, one can take a weaker ordering than $\leqslant_c^p$.
Namely, to a box $b$ in the $i$th diagram with coordinates
$(x,y)$ we assign the number $\operatorname{cont}_p(b)=h_i+\kappa(y-x)$. We say that $b\sim b'$  if $\operatorname{cont}_p(b)-\operatorname{cont}_p(b')\in \Z+\frac{i-i'}{\ell}$. We say that $b<b'$ if $b\sim b'$
and $ \operatorname{cont}_p(b)-\operatorname{cont}_p(b')\in \mathbb{Q}_{<0}$.  For multipartitions $\lambda,\lambda'$ of $n$, we say that $\lambda\leqslant^p \lambda'$ if one can order the boxes $b_1,\ldots,b_n$ of $\lambda$ and $b_1',\ldots,b_n'$ of $\lambda'$ in such a way that $b_i\leqslant b_i'$. It is known, thanks to computations of $c_p(\lambda)$ done in \cite{rouqqsch}, that $\lambda\leqslant^p \lambda'$ implies $\lambda\leqslant^p_c \lambda'$.

We remark that for $\kappa=0$, the category $O_p$ is the $\mathfrak{S}_n$-equivariant category $\mathcal{O}$
for the algebra $H_p(1)^{\otimes n}$, where $H_p(1)$ is the cyclotomic Rational Cherednik algebra for $n=1$
depending on the parameters $h_0,\ldots,h_{\ell-1}$. The category $\mathcal{O}$ for $n=1$ is very easy and hence,
to some extent, the case of $\kappa=0$ is completely understood. However, for us, it presents some complications
of combinatorial nature, so we are not going to consider that case.


\subsection{Derived equivalence}\label{SS:dereq}
We have the functor $\Gamma: \mathcal{H}_p\operatorname{-mod}\rightarrow H_p\operatorname{-mod}$ of taking global sections.
It has a left adjoint, the localization functor, $\Loc:=\mathcal{H}_p\otimes_{H_p}\bullet$. Since an $H_p$-module lies in
$O_p$ if and only if it is finitely generated over $S(\h^*)$ and can be made $\C^\times$-equivariant, we see
that $\Gamma,\Loc$ restrict to functors between $O_p$ and $\mathcal{O}_p$.

We also can consider the derived functors $R\Gamma, L\Loc$ between the categories $D^b(\mathcal{H}_p\operatorname{-mod})$
and $D^b(H_p\operatorname{-mod})$. It follows from results of \cite[Section 5]{GL} that these functors are mutually inverse
equivalences.

We can also consider the analogous functors between the categories of $\mathcal{A}_p$-modules and of $eH_pe$-modules.
The functor $\Gamma: \mathcal{A}_p\operatorname{-mod}\rightarrow eH_pe\operatorname{-mod}$ is the composition
of the Morita equivalence between $\mathcal{H}_p$ and $\mathcal{A}_p$, of $\Gamma:\mathcal{H}_p\operatorname{-mod}\rightarrow H_p\operatorname{-mod}$, and of the functor $M\mapsto eM: H_p\operatorname{-mod} \rightarrow eH_pe\operatorname{-mod}$. A parameter $p$ is called {\it spherical} if the latter is an equivalence and {\it aspherical} else. The aspherical locus was determined by Dunkl and Griffeth in \cite{DG}. Namely, they have shown that the aspherical locus is the union of the hyperplanes
\begin{itemize}
\item
$\kappa=\frac{r}{s}$,  where $0<r\leqslant s, 1<s\leqslant n$,
\item
$ N/\ell=h_{i-N} - h_{i} + m\kappa$, where $i$ is an index, $m$ is an integer with $|m|<n$, $N$ is an integer not divisible by $\ell$ such that $$1\leqslant N\leqslant i + \left( \sqrt{n+\frac{1}{4}m^2} - \frac{1}{2}m-1\right) \ell,$$
and $h_{i-N}$ stands for $h_j$ with $0\leqslant j\leqslant \ell-1$ and $i-N-j$ divisible by $\ell$.
\end{itemize}

\subsection{Main result and corollaries}\label{SS:main_res}
The following is the main result of this paper.
\begin{Thm}\label{Thm:equiv}
Assume $\kappa\neq 0$.
Suppose the orders $\leqslant^\theta$ and $\leqslant^p$ are refined by a common partial order $\leqslant$
(meaning that $\lambda\leqslant^\theta \mu\Rightarrow \lambda\leqslant \mu, \lambda\leqslant^p \mu\Rightarrow
\lambda\leqslant \mu$). Then the functors $\Gamma: \mathcal{H}_p\operatorname{-mod}\rightleftarrows
H_p\operatorname{-mod}: \operatorname{Loc}$ are mutually inverse category equivalences. Consequently, $\Gamma: \mathcal{A}_p\operatorname{-mod}
\rightarrow eH_pe\operatorname{-mod}$ is a quotient functor.
\end{Thm}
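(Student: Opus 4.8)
The plan is to leverage the derived equivalence $R\Gamma: D^b(\mathcal{H}_p\operatorname{-mod})\rightleftarrows D^b(H_p\operatorname{-mod})$ established in \cite{GL}, and upgrade it to an equivalence of abelian categories. A standard principle (used e.g. in \cite{kashrouq,MN}) says that once $R\Gamma$ is a derived equivalence, the abelian localization $\Gamma$ is an equivalence as soon as $\Gamma$ is exact, equivalently as soon as $\Loc$ is exact (so that $L\Loc$ has cohomology concentrated in degree $0$). Indeed, if both $R\Gamma$ and $L\Loc$ are exact, they descend to mutually inverse equivalences of the abelian categories, and the adjunction units/counits, being isomorphisms in the derived category, are isomorphisms of abelian-category functors. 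So the entire problem reduces to proving exactness of one of these functors, and by the adjoint pair structure it suffices to control one direction.

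First I would reduce exactness of the global functor to a statement about highest weight categories. Both $O_p$ and $\mathcal{O}_p$ are highest weight, with standard objects indexed by $\operatorname{P}_\ell(n)$; the functors $\Gamma,\Loc$ restrict to functors between $\mathcal{O}_p$ and $O_p$, as noted in Subsection~\ref{SS:dereq}. The key observation, following \cite{VV_proof}, is that one can describe the local Verma (standard) objects $\Delta^{\mathcal{H}}_p(\lambda)$ in $\mathcal{O}_p$ via the Procesi bundle $\mathcal{P}$, and that $\Gamma$ sends $\Delta^{\mathcal{H}}_p(\lambda)$ to the Verma module $\Delta_p(\lambda)$ (up to lower terms in the highest weight order). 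The hypothesis that $\leqslant^\theta$ and $\leqslant^p$ are refined by a common order $\leqslant$ is exactly what guarantees that $\Gamma$ is compatible with the two highest weight structures: it matches standards to standards and respects the filtration order, so $R^{>0}\Gamma$ vanishes on standardly filtered objects, hence on all of $\mathcal{O}_p$. This forces $\Gamma|_{\mathcal{O}_p}$ to be exact and, with the derived equivalence in hand, to be an equivalence $\mathcal{O}_p\xrightarrow{\sim}O_p$.

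Next I would bootstrap from categories $\mathcal{O}$ to the full module categories. The point is that exactness of $\Gamma$ on $\mathcal{O}_p$ already pins down enough homological data to force exactness on all of $\mathcal{H}_p\operatorname{-mod}$: a general $\mathcal{H}_p$-module can be approached through objects whose supports are controlled by the $T_2$-action, and the higher cohomology $R^{>0}\Gamma$ is detected, via a support/filtration argument together with the vanishing $H^{>0}(\mathcal{H}_p)=0$ from Subsection~\ref{SS:alg}, on the category-$\mathcal{O}$ pieces. Once $\Gamma$ is known to be exact on $\mathcal{H}_p\operatorname{-mod}$, the already-established derived equivalence $R\Gamma$ degenerates to an equivalence of abelian categories $\Gamma:\mathcal{H}_p\operatorname{-mod}\rightleftarrows H_p\operatorname{-mod}:\Loc$ with $\Loc$ its inverse. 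The final sentence, that $\Gamma:\mathcal{A}_p\operatorname{-mod}\rightarrow eH_pe\operatorname{-mod}$ is a quotient functor, then follows formally: by the description in Subsection~\ref{SS:dereq}, this functor factors as the Morita equivalence $\mathcal{A}_p\simeq\mathcal{H}_p$, followed by the equivalence $\Gamma:\mathcal{H}_p\operatorname{-mod}\xrightarrow{\sim}H_p\operatorname{-mod}$, followed by the spherical functor $M\mapsto eM$, which is always a quotient functor.

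The main obstacle will be the comparison of the two highest weight structures, i.e.\ showing that $\Gamma$ carries local Vermas to global Vermas compatibly with the orders. The geometric order $\leqslant^\theta$ is defined through the contracting loci $\overline{Y}_\mu$ on $\M^\theta$, whereas $\leqslant^p$ is the combinatorial/contents order coming from the $c_p$-function; these live in genuinely different worlds, and it is only the assumption that a common refinement $\leqslant$ exists that lets them be compared at all. Concretely, the hard step is to verify that under $\Gamma$ the standard object $\Delta^{\mathcal{H}}_p(\lambda)$ has image with $\Delta_p(\lambda)$ as its ``top'' standard subquotient in the order $\leqslant$, so that $\Gamma$ is a highest-weight-preserving functor and consequently exact. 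This is where the Procesi-bundle description of Vermas from \cite{VV_proof} and the contents computations from \cite{rouqqsch,Griffeth} must be combined; the rest of the argument is comparatively formal homological algebra.
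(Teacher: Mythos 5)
Your first two paragraphs are essentially sound and track the paper's actual strategy for categories $\mathcal{O}$: the paper defines $\Delta_p^{loc}(\lambda):=[\mathcal{H}_p/\mathcal{H}_p\h]e_\lambda$, shows $R\Gamma(\Delta_p^{loc}(\lambda))=\Delta_p(\lambda)$ and $L\Loc(\Delta_p(\lambda))=\Delta_p^{loc}(\lambda)$, uses the support estimate for $\mathcal{P}^*/\mathcal{P}^*\h$ from \cite{BF} together with the common-order hypothesis to identify $\Delta_p^{loc}(\lambda)$ with the standard object of $\OCat_p$, and then upgrades exactness on standardly filtered objects to an equivalence $\OCat_p\simeq O_p$ (the paper does this by a tilting argument showing $\Loc(P_p(\lambda))$ is projective, which is more careful than your sketch but in the same spirit). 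Your general principle in the first paragraph --- derived equivalence plus exactness of $\Gamma$ forces abelian equivalence --- is also correct.

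The genuine gap is your third paragraph, the bootstrap from categories $\mathcal{O}$ to the full module categories, which you dismiss as ``comparatively formal homological algebra.'' It is not, and your proposed mechanism does not work as stated: a general object of $\mathcal{H}_p\operatorname{-mod}$ (or $H_p\operatorname{-mod}$) need not be supported on the contracting locus, need not admit a $T_2$-equivariant structure, and admits no filtration by category-$\mathcal{O}$ pieces, so there is no ``support/filtration argument'' by which $R^{>0}\Gamma$ on arbitrary modules is detected on $\OCat_p$; the vanishing $H^{>0}(\M^\theta,\mathcal{H}_p)=0$ gives acyclicity of the single object $\mathcal{H}_p$, not of its arbitrary quotients or submodules. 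The paper's actual argument is the hardest part of the proof and uses entirely different tools: following \cite[Section 5.3]{BPW}, abelian localization is reformulated as the statement that the translation bimodules $H_{p,\chi}$, $H_{p+\chi,-\chi}$ (global sections of quantized line bundles) are mutually inverse Morita equivalences for sufficiently dominant $\chi$; the category-$\mathcal{O}$ equivalence gives that the natural map $B:=H_{p+\chi,-\chi}\otimes_{H_{p+\chi}}H_{p,\chi}\rightarrow H_p$ becomes an isomorphism after $\otimes_{H_p}N$ for every $N\in O_p$; and the passage from this to $B\cong H_p$ requires two nontrivial inputs: Ginzburg's theorem \cite{Ginzburg_irr} that every primitive ideal of $H_p$ is the annihilator of a simple object of $O_p$ (to get surjectivity of $B\rightarrow H_p$), and the Bezrukavnikov--Etingof parabolic restriction functors $\bullet_\dagger$ of \cite{BE,sraco} applied to the kernel $K$ and to a suitable simple $N\in O_p$ (to show $K=0$ by producing a nonzero $K_\dagger\otimes_{\underline{H}_p^+}N_\dagger$ from any nonzero $K$). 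In short, the fact that the full module category statement can be ``detected'' on category $\mathcal{O}$ is precisely the theorem's main difficulty, not a formality, and your proposal supplies no argument for it; relatedly, your closing assessment inverts the actual distribution of difficulty in the proof.
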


The following corollary answers a question of Jenkins, \cite{Jenkins}, in affirmative.

\begin{Cor}\label{Cor:equiv1} We still assume that $\kappa\neq 0$. Then the following holds.
\begin{enumerate}
\item
For every $p$, there is $\theta$ such that $\Gamma: \mathcal{H}_p\operatorname{-mod}\rightleftarrows H_p\operatorname{-mod}:\Loc$
are mutually inverse category equivalences.
\item Suppose that, for the parameters $p,p'$ with integral difference (meaning that $p-p'$ can be represented by an $\ell+1$-tuple
of integers) and $\kappa,\kappa'\neq 0$,
the orders $\leqslant^{p},\leqslant^{p'}$ are equivalent. Then
there is an equivalence $O_p\rightarrow O_{p'}$ that maps $\Delta_{p}(\lambda)$ to $\Delta_{p'}(\lambda)$
and intertwines the KZ functors of \cite{GGOR}.
\end{enumerate}
\end{Cor}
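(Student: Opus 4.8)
The plan is to derive both statements from Theorem~\ref{Thm:equiv}, treating part (1) first and then bootstrapping part (2) from it.

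For part (1), fix $p$ with $\kappa\neq 0$. Since the geometric order $\leqslant^\theta$ depends only on the chamber of $\theta$ in the genericity arrangement, it suffices to exhibit a single chamber for which $\leqslant^\theta$ and $\leqslant^p$ admit a common refinement, i.e.\ for which the union of the two sets of order relations is acyclic; Theorem~\ref{Thm:equiv} then applies with $\leqslant$ that common refinement. First I would recall the combinatorial description of $\leqslant^\theta$ on $\operatorname{P}_\ell(n)$ coming from the attracting cells of the $T_2$-action on $\M^\theta$, under the identification $(\M^\theta)^{T_2}\cong \operatorname{P}_\ell(n)$ of \cite{Gordon}: for $\theta$ deep in a chamber this order is governed by a ``$\theta$-charged content'' on boxes, just as $\leqslant^p$ is governed by $\operatorname{cont}_p(b)=h_i+\kappa(y-x)$. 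The point is then to choose the chamber so that the charges reading off $\leqslant^\theta$ are aligned with the contents $\operatorname{cont}_p$; concretely I would arrange that both orders are refinements of the order induced by a single generic linear functional on the lattice of box multiplicities, which forces acyclicity of the union. Existence of such a $\theta$ uses $\kappa\neq 0$ (so that $\leqslant^p$ is a genuine content order) together with the fact that the genericity chambers separate exactly the hyperplanes relevant to the comparison. I expect this alignment to be the main obstacle of part (1): it is the one genuinely combinatorial input, and matching the geometric charges to the algebraic contents requires care precisely on the walls where the two orders could disagree.

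For part (2), suppose $p-p'$ is integral and $\leqslant^p$ is equivalent to $\leqslant^{p'}$. By part (1) I may choose one $\theta$ for which $\Gamma\colon \mathcal{H}_p\operatorname{-mod}\rightleftarrows H_p\operatorname{-mod}$ is an equivalence; because $\leqslant^{p'}$ agrees with $\leqslant^p$ as a partial order, the same chamber satisfies the hypothesis of Theorem~\ref{Thm:equiv} for $p'$, so $\Gamma\colon \mathcal{H}_{p'}\operatorname{-mod}\rightleftarrows H_{p'}\operatorname{-mod}$ is an equivalence as well, both quantizations living on the same $\M^\theta$ since $\M^\theta$ depends only on $\theta$. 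On the geometric side, integrality of $p-p'$ means $\mathcal{H}_p$ and $\mathcal{H}_{p'}$ are related by twisting with the line bundle on $\M^\theta$ corresponding to $p-p'$; this twist is an equivalence $\mathcal{H}_p\operatorname{-mod}\rightarrow\mathcal{H}_{p'}\operatorname{-mod}$ that is $T_2$-equivariant and preserves support on $Y$, hence restricts to an equivalence $\mathcal{O}_p\rightarrow\mathcal{O}_{p'}$ preserving the labeling of standards by the fixed points $z_\lambda$, the order $\leqslant^\theta$ being insensitive to the quantization parameter. Composing the localization equivalence for $p$, this twist, and the global-sections equivalence for $p'$, I obtain an equivalence $O_p\rightarrow O_{p'}$ of highest weight categories.

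It then remains to identify the composite on standards and on $\mathrm{KZ}$. For the standards I would invoke the description of Verma modules through the Procesi bundle used in \cite{VV_proof} and in the proof of Theorem~\ref{Thm:equiv}: $\Loc$ sends $\Delta_p(\lambda)$ to the local standard attached to $z_\lambda$, the twist preserves the label $\lambda$, and $\Gamma$ returns $\Delta_{p'}(\lambda)$, so $\Delta_p(\lambda)\mapsto\Delta_{p'}(\lambda)$. For the $\mathrm{KZ}$ functors I would use their characterization in \cite{GGOR} as the quotient functor $\Hom(P,-)$ for the projective-injective object $P$, equivalently as the quotient killing the objects supported off the generic locus; since the attached Hecke parameter depends on $p$ only modulo $\Z$, the integral twist preserves it, and the geometric equivalences above are compatible with restriction to that generic locus, so the composite intertwines $\mathrm{KZ}_p$ and $\mathrm{KZ}_{p'}$. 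The delicate point here is this last compatibility: one must verify that the line-bundle twist commutes with the localization to the Hecke category rather than merely relating the two categories $\mathcal{O}$, which I would settle by comparing the two quotient functors on the common projective-injective object $P$.
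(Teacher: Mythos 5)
Your overall strategy---reducing both parts to Theorem \ref{Thm:equiv}---is the paper's, and your part (2) is essentially the paper's argument: a single $\theta$ works simultaneously for $p$ and $p'$ because $\leqslant^p$ and $\leqslant^{p'}$ are equivalent; the two global-section equivalences are composed with the twist by the quantized line bundle attached to the integral difference $p-p'$, giving a label-preserving highest weight equivalence $O_p\rightarrow O_{p'}$; and the KZ compatibility is a separate point, which the paper itself does not reprove but attributes to \cite[Section 9.4]{VV_proof}, so your sketch there is at least as detailed as the original.

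Part (1), however, has a genuine gap, and it sits exactly where you yourself flag ``the main obstacle.'' The entire content of part (1) is the existence of a generic $\theta$ such that $\leqslant^\theta$ and $\leqslant^p$ admit a common refinement, and your argument for this existence is a plan, not a proof: you posit, without proof or reference, that for $\theta$ deep in a chamber the order $\leqslant^\theta$ is governed by a ``$\theta$-charged content'' on boxes, and you then say you ``would arrange'' the alignment of these charges with $\operatorname{cont}_p$. No such combinatorial description of $\leqslant^\theta$ is established in the paper, and nothing in your sketch produces the required chamber. The paper's route is different and concrete: for $p$ with real entries one sets $\theta_p=(-\kappa+h_0-h_{\ell-1},\,h_1-h_0,\ldots,h_{\ell-1}-h_{\ell-2})$; by \cite{Gordon}, $c_p(\lambda)$ equals the value at the fixed point $z_\lambda$ of the moment map Morse function on $\M^{\theta_p}$, so Morse theory shows that $\leqslant^{\theta_p}$ is refined by $\leqslant^p_c$; since $\leqslant^p$ is also refined by $\leqslant^p_c$ (by the computations of \cite{rouqqsch} recalled in the paper), the order $\leqslant^p_c$ is the common refinement demanded by Theorem \ref{Thm:equiv}. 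Two further problems, which your proposal never mentions, must then be solved: $\theta_p$ need not be generic, and $p$ need not have real entries. The paper handles the first by deforming $p$ to $p'$ with $p-p'$ integral and $\leqslant^p=\leqslant^{p'}$, the set of such $p'$ being Zariski dense so that some $\theta_{p'}$ is generic, and then applying Theorem \ref{Thm:equiv} to the pair $(p,\theta_{p'})$; it handles the second by replacing $p$ with a real parameter inducing the same order. Without these steps, or substitutes for them, your part (1) does not close---and since your part (2) invokes part (1) to produce its $\theta$, the gap propagates there as well.
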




In fact, using an approach of Rouquier, \cite{rouqqsch}, extended in \cite{VV_proof} one can give another proof
of this corollary (under somewhat weaker assumptions).

\begin{Rem}
In \cite[Conjecture 9.2]{BL} Bezrukavnikov and myself produced a conjectural description of the locus, where
the abelian localization holds for a quantization of a quiver variety of a finite or affine type.
That conjecture should follow from Theorem \ref{Thm:equiv} (one still needs to treat the $\kappa=0$
case separately) and some combinatorial argument. We do not pursue that in the present paper.
\end{Rem}

%

\section{Proof of the main theorem}\label{S:proof}
\subsection{Standard objects in $\OCat_p$}\label{S:stand}
In this section, motivated by \cite{VV_proof}, we are going to define objects $\Delta_p^{loc}(\lambda)$ in $\OCat_p$ and show that they are standard objects in
that highest weight category. Moreover, we will see that $R\Gamma(\Delta_p^{loc}(\lambda))=\Delta_p(\lambda)$ and
$L\Loc(\Delta_p(\lambda))=\Delta_p^{loc}(\lambda)$. As a corollary, $\Gamma, \Loc$ are exact on standardly filtered objects
in $O_p,\OCat_p$.

Recall that $\Delta_p(\lambda)$ is defined as follows. We set $\Delta_p:= H_p/H_p\h$. Then $\Delta_p=\Delta e_\lambda$, where
$e_\lambda\in \C W$ stands for the idempotent corresponding to the irreducible $W$-module $\lambda$. We set $\Delta_p^{loc}:=\mathcal{H}_p/\mathcal{H}_p \h$ and $\Delta_p^{loc}(\lambda):=\Delta_p^{loc} e_\lambda$.

Similarly to \cite[Section 9.2]{VV_proof}, we get the following lemma.

\begin{Lem}\label{Lem:Delta_prop}
The following is true:
\begin{enumerate}
\item $\mathcal{H}_p$ is flat over $S(\h)$.
\item $H^0(\M^\theta, \Delta_p^{loc}(\lambda))=\Delta_p(\lambda), H^i(\M^\theta, \Delta_p^{loc}(\lambda))=0$ for $i>0$.
\item $L\Loc(\Delta_p(\lambda))=\Delta^{loc}_p(\lambda)$.
\end{enumerate}
\end{Lem}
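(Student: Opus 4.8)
The plan is to treat part (1) as the essential input and to obtain parts (2) and (3) as formal consequences, using only facts already recorded: the higher cohomology of $\mathcal{H}_p$ vanishes and $\Gamma(\mathcal{H}_p)=H_p$; the algebra $H_p$ is free over $S(\h)$ by the PBW property (so that $\operatorname{gr}H_p=\C[\h\oplus\h^*]\#W$ is free over $\operatorname{gr}S(\h)=S(\h)$); and $R\Gamma,L\Loc$ are mutually inverse equivalences by \cite{GL}. Thus the whole lemma hinges on the flatness in (1).

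For (1) I would argue on the order filtration. By construction $\operatorname{gr}\mathcal{H}_p=\operatorname{End}_{\mathcal{O}_{\M^\theta}}(\mathcal{P})$, and the copy $S(\h)\subset\mathcal{H}_p$ has symbol the tautological $S(\h)\subset\C[\h\oplus\h^*]\subset\Gamma(\operatorname{End}_{\mathcal{O}_{\M^\theta}}(\mathcal{P}))$, with $\operatorname{gr}S(\h)=S(\h)$ for the induced grading. Since the filtration is exhaustive and bounded below, a standard filtered-flatness argument (run locally on $\M^\theta$) reduces the flatness of $\mathcal{H}_p$ over $S(\h)$ to that of $\operatorname{End}_{\mathcal{O}_{\M^\theta}}(\mathcal{P})$ over $S(\h)$. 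As $\mathcal{P}$ is locally free over $\mathcal{O}_{\M^\theta}$, the sheaf $\operatorname{End}_{\mathcal{O}_{\M^\theta}}(\mathcal{P})\cong\mathcal{P}\otimes_{\mathcal{O}_{\M^\theta}}\mathcal{P}^*$ is, Zariski-locally, a finite direct sum of copies of $\mathcal{P}$ as an $S(\h)$-module, so its flatness follows from that of $\mathcal{P}$. The flatness of the Procesi bundle $\mathcal{P}$ over the polynomial subalgebra $S(\h)$ is the real content, and this is the step I expect to be the main obstacle: it is the cyclotomic analogue of Haiman's polygraph flatness on the Hilbert scheme and must be extracted from the construction of $\mathcal{P}$ as in \cite{VV_proof,BF}.

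Granting (1), I would prove (2) by a Koszul computation. Writing $\C_0=S(\h)/S(\h)_+$ for the residue field at the origin of $\h^*$, we have $\Delta_p^{loc}=\mathcal{H}_p/\mathcal{H}_p\h=\mathcal{H}_p\otimes_{S(\h)}\C_0$, and the Koszul complex $\mathcal{H}_p\otimes_\C\Lambda^\bullet\h$ (with contraction differential built from the right $S(\h)$-action) computes $\operatorname{Tor}^{S(\h)}_\bullet(\mathcal{H}_p,\C_0)$; by (1) it is a resolution of $\Delta_p^{loc}$. Each term $\mathcal{H}_p\otimes_\C\Lambda^k\h$ is a finite direct sum of copies of $\mathcal{H}_p$, hence acyclic in positive degrees with $H^0=H_p\otimes_\C\Lambda^k\h$, so the hypercohomology spectral sequence collapses and $R\Gamma(\Delta_p^{loc})$ is computed by the global Koszul complex $H_p\otimes_\C\Lambda^\bullet\h$. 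Its homology is $\operatorname{Tor}^{S(\h)}_\bullet(H_p,\C_0)$, which by PBW freeness of $H_p$ over $S(\h)$ is concentrated in degree $0$ and equals $H_p\otimes_{S(\h)}\C_0=\Delta_p$. Multiplying on the right by $e_\lambda$ gives $H^0(\M^\theta,\Delta_p^{loc}(\lambda))=\Delta_p(\lambda)$ and $H^i=0$ for $i>0$.

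Finally, (3) is immediate from (2): since $R\Gamma$ and $L\Loc$ are mutually inverse, applying $L\Loc$ to the identity $R\Gamma(\Delta_p^{loc}(\lambda))=\Delta_p(\lambda)$ yields $L\Loc(\Delta_p(\lambda))=\Delta_p^{loc}(\lambda)$. Alternatively one repeats the Koszul argument directly: resolving $\Delta_p(\lambda)=\Delta_p e_\lambda$ over $H_p$ by the projective complex $(H_p\otimes_\C\Lambda^\bullet\h)e_\lambda$ (valid as $H_p$ is free over $S(\h)$), one finds that $L\Loc(\Delta_p(\lambda))=\mathcal{H}_p\otimes^L_{H_p}\Delta_p(\lambda)$ is computed by $(\mathcal{H}_p\otimes_\C\Lambda^\bullet\h)e_\lambda$, which by (1) again has cohomology $\Delta_p^{loc}(\lambda)$ in degree $0$ only.
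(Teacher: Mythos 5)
Your overall scheme is the same as the paper's: the paper gives no argument of its own for this lemma, saying only ``Similarly to \cite[Section 9.2]{VV_proof}, we get the following lemma,'' and the argument there is exactly the one you outline --- part (1) carried by $\operatorname{gr}\mathcal{H}_p=\mathcal{E}nd(\mathcal{P})$, part (2) by the Koszul complex of $\h$ together with $\Gamma$-acyclicity of $\mathcal{H}_p$ and freeness of $H_p$ over $S(\h)$, and part (3) either formally from the derived equivalence of \cite{GL} or by a second Koszul computation. Your treatments of (2) and (3) are complete and correct given (1), and your filtered reduction of (1) to flatness of $\mathcal{E}nd(\mathcal{P})\cong\mathcal{P}\otimes_{\mathcal{O}_{\M^\theta}}\mathcal{P}^*$, hence of $\mathcal{P}$, over $S(\h)$ is also fine.

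The one place where your write-up stops short of a proof is the step you yourself flag as ``the main obstacle'': flatness of $\mathcal{P}$ over $S(\h)$, which you describe as a cyclotomic analogue of Haiman's polygraph flatness to be extracted from \cite{VV_proof,BF}. This overstates the difficulty, and you could have closed the gap with a soft argument. The paper's setup already supplies the deep facts: $\mathcal{P}$ is a vector bundle on the smooth $2n$-dimensional variety $\M^\theta$, it carries an action of $\C[\h\oplus\h^*]\#W$ by $\mathcal{O}_{\M^\theta}$-linear endomorphisms, and the central subalgebra $\C[\h\oplus\h^*]^W$ acts through the $\mathcal{O}_{\M^\theta}$-module structure via $\rho:\M^\theta\rightarrow (\h\oplus\h^*)/W$. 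Consequently, viewing $\mathcal{P}$ as a coherent sheaf of $S(\h)\otimes\mathcal{O}_{\M^\theta}$-modules, it is finite over $\mathcal{O}_{\M^\theta}$ and Cohen--Macaulay of dimension $2n$, and the fiber of its support over a point $b\in \operatorname{Spec}S(\h)$ is contained in $\rho^{-1}\bigl((\h\oplus Wb)/W\bigr)$, which has dimension at most $n$ by semismallness of the symplectic resolution $\rho$ (for $b=0$ this is the union of the Lagrangian cells contracted by the appropriate $T_2$-action). Since the base $\operatorname{Spec}S(\h)$ is smooth of dimension $n=2n-n$, the local criterion for flatness over a regular base (``miracle flatness'') gives flatness of $\mathcal{P}$, hence of $\mathcal{E}nd(\mathcal{P})$ and of $\mathcal{H}_p$, over $S(\h)$. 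In other words, the genuinely hard input --- the existence of $\mathcal{P}$ with $\operatorname{End}(\mathcal{P})=\C[\h\oplus\h^*]\#W$ and vanishing higher $\Ext$'s, which for $\ell=1$ does rest on Haiman's work --- is already assumed in Section 2.2 of the paper, and no further polygraph-type theorem needs to be imported.
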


The next proposition is the place where we need to use a compatibility between $p$ and $\theta$.

\begin{Prop}\label{Prop:stand}
Suppose that the orders $\leqslant^\theta, \leqslant^p$ are refined by some common partial order on $\operatorname{P}_\ell(n)$.
The object $\Delta^{loc}_p(\lambda)$ is the standard in $\OCat_p$ marked by $\lambda$.
\end{Prop}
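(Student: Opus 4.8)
The plan is to play the representation-theoretic order $\leqslant^p$ against the geometric order $\leqslant^\theta$: the derived equivalence of \cite{GL} controls the first, the geometry of the contracting loci controls the second, and the hypothesis that both are refined by a common $\leqslant$ is exactly what lets the two halves be combined. Throughout, write $\widetilde{\Delta}(\lambda)$ for the genuine standard object of the highest weight category $\OCat_p$ (taken with respect to $\leqslant^\theta$), so that the assertion to be proved is $\Delta_p^{loc}(\lambda)\cong\widetilde{\Delta}(\lambda)$.

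First I would extract the homological input. By Lemma \ref{Lem:Delta_prop}(2),(3), the equivalence $R\Gamma\colon D^b(\mathcal{H}_p\operatorname{-mod})\xrightarrow{\sim}D^b(H_p\operatorname{-mod})$ of \cite{GL} carries $\Delta_p^{loc}(\lambda)$ to $\Delta_p(\lambda)$, placed in homological degree $0$. Hence
\[
\Ext^i_{\OCat_p}(\Delta_p^{loc}(\lambda),\Delta_p^{loc}(\mu))\cong \Ext^i_{O_p}(\Delta_p(\lambda),\Delta_p(\mu))
\]
for all $i$. Since $O_p$ is highest weight with standards $\Delta_p(\lambda)$ and order $\leqslant^p$, the right-hand side is $\C$ for $i=0,\ \lambda=\mu$, vanishes for $i=0$ unless $\lambda\leqslant^p\mu$, and vanishes for $i>0$ unless $\lambda<^p\mu$. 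As $\leqslant^p$ is refined by $\leqslant$, I conclude that $\{\Delta_p^{loc}(\lambda)\}$ is $\Ext$-triangular for $\leqslant$: one has $\operatorname{End}(\Delta_p^{loc}(\lambda))=\C$, $\Hom(\Delta_p^{loc}(\lambda),\Delta_p^{loc}(\mu))\neq0\Rightarrow\lambda\leqslant\mu$, and $\Ext^{>0}(\Delta_p^{loc}(\lambda),\Delta_p^{loc}(\mu))\neq0\Rightarrow\lambda<\mu$.

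Next I would extract the geometric input, the class of $\Delta_p^{loc}(\lambda)$ in $K_0(\OCat_p)$. Passing through the Morita equivalence with $\mathcal{A}_p$ and choosing a good filtration, $\operatorname{gr}\Delta_p^{loc}=\operatorname{gr}(\mathcal{H}_p/\mathcal{H}_p\h)$ is supported on the contracting locus $Y$; the summand cut out by $e_\lambda$ is set-theoretically supported on $\overline{Y}_\lambda$ and has multiplicity one along its open part. Since $L(\mu)$ is supported on $\overline{Y}_\mu$, this gives $[\Delta_p^{loc}(\lambda):L(\mu)]\neq 0\Rightarrow z_\mu\in\overline{Y}_\lambda$, i.e. $\mu\leqslant^\theta\lambda$, with $[\Delta_p^{loc}(\lambda):L(\lambda)]=1$. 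Inverting the unitriangular change of basis between the simples and the standards $\widetilde{\Delta}(\mu)$ of the $\leqslant^\theta$-structure, I obtain
\[
[\Delta_p^{loc}(\lambda)]=[\widetilde{\Delta}(\lambda)]+\sum_{\mu<^\theta\lambda}a_{\lambda\mu}[\widetilde{\Delta}(\mu)],
\]
unitriangular for $\leqslant^\theta$ with leading coefficient $1$.

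Finally I would run the recognition step, inducting along a linear extension of $\leqslant$. For $\lambda$ minimal, $\widetilde{\Delta}(\lambda)=L(\lambda)$ while $\Delta_p^{loc}(\lambda)$ has class $[L(\lambda)]$ and endomorphism ring $\C$, so the two coincide. In the inductive step I would use the $\Ext$-triangularity for $\leqslant$ to see that $\Delta_p^{loc}(\lambda)$ has no self-extensions and the expected $\Hom$-orthogonality, forcing its head to be $L(\lambda)$ and its radical to lie strictly below $\lambda$; combined with the $K_0$-identity and the inductive hypothesis $\Delta_p^{loc}(\mu)\cong\widetilde{\Delta}(\mu)$ for $\mu<\lambda$, this upgrades the equality of classes to an isomorphism $\Delta_p^{loc}(\lambda)\cong\widetilde{\Delta}(\lambda)$. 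I expect the main obstacle to be precisely this recognition together with the support computation that feeds it: the $\Ext$-vanishing is governed by the representation-theoretic order $\leqslant^p$, whereas the filtration by composition factors is governed by the geometric order $\leqslant^\theta$, and it is only the existence of the common refinement $\leqslant$ that lets both be organized into a single induction. This is the one point at which the compatibility hypothesis between $p$ and $\theta$ is genuinely used.
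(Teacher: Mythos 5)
Your overall strategy is the same as the paper's: triangularity of support with respect to $\leqslant^\theta$, plus $\Ext$/$\Hom$ triangularity with respect to $\leqslant^p$ obtained by transporting standard objects through the derived equivalence $R\Gamma$, glued together by an induction that is only possible because of the common refinement $\leqslant$. But there is a genuine gap in your geometric input. You assert that, after passing through the Morita equivalence and choosing a good filtration, the summand $[\mathcal{H}_p/\mathcal{H}_p\h]e_\lambda$ is set-theoretically supported on $\overline{Y}_\lambda$ with multiplicity one along $Y_\lambda$, as though this were a routine consequence of the general structure. It is not. Support on the whole contracting locus $Y$ is indeed formal, but the refinement to $\overline{Y}_\lambda$ (the paper states it as support on $\cup_{\mu\leqslant^\theta\lambda}Y_\mu$, which is what the triangularity actually requires) is precisely the theorem of Bezrukavnikov and Finkelberg \cite{BF} about the \emph{distinguished} Procesi bundle: it is the only place in the proof where the particular choice of $\mathcal{P}$, and the normalization of the bijection between $\operatorname{P}_\ell(n)$ and $(\M^\theta)^{T_2}$, enter. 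For a different Procesi bundle the assertion fails with this labelling, and with it the proposition. So your Step 2 silently assumes the hardest ingredient; no choice of good filtration produces it.

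A second, lesser problem is that your recognition step ends in a gesture rather than an argument. You never determine the coefficients $a_{\lambda\mu}$, so there is no ``equality of classes'' available to upgrade; and even a genuine equality $[\Delta_p^{loc}(\lambda)]=[\widetilde{\Delta}(\lambda)]$ together with a simple head would not by itself force an isomorphism. The paper closes the induction differently: it quotes from \cite{BLPW} the characterization of $\widetilde{\Delta}(\lambda)$ as the indecomposable projective marked by $\lambda$ in the subcategory of objects supported on $\cup_{\mu\leqslant^\theta\lambda}Y_\mu$, which reduces the inductive step exactly to the conditions $\Ext^{>0}(\Delta_p^{loc}(\lambda),\Delta_p^{loc}(\mu))=0$ and $\dim\Hom(\Delta_p^{loc}(\lambda),\Delta_p^{loc}(\mu))=\delta_{\lambda\mu}$ for $\mu\leqslant^\theta\lambda$, which you do establish. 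Your ingredients can in fact be made to work: dimension shifting along the already-identified standards $\Delta_p^{loc}(\mu)=\widetilde{\Delta}(\mu)$, $\mu<^\theta\lambda$, gives $\Hom(\Delta_p^{loc}(\lambda),L(\nu))=\Ext^{\geqslant 1}(\Delta_p^{loc}(\lambda),L(\nu))=0$ for $\nu<^\theta\lambda$; combined with multiplicity one this shows the head of $\Delta_p^{loc}(\lambda)$ is $L(\lambda)$; projectivity of $\widetilde{\Delta}(\lambda)$ in the subcategory then lifts $\widetilde{\Delta}(\lambda)\twoheadrightarrow L(\lambda)$ to a surjection $\widetilde{\Delta}(\lambda)\twoheadrightarrow\Delta_p^{loc}(\lambda)$, whose kernel has factors $L(\nu)$, $\nu<^\theta\lambda$, so the $\Ext^1$-vanishing splits the surjection and indecomposability of $\widetilde{\Delta}(\lambda)$ forces the kernel to vanish. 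But this homological argument (or the equivalent appeal to the \cite{BLPW} characterization) is the actual content of what you call ``recognition,'' and it is absent from your write-up.
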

\begin{proof}
Recall that  $Y_\lambda:=\{y\in \M^\theta| \lim_{t\rightarrow 0}t.y=z_\lambda\}$,  where $z_\lambda$
means a $T_2$-stable point marked by $\lambda$. By the results of Bezrukavnikov and
Finkelberg, \cite{BF}, and our choice of $\mathcal{P}$, the sheaf $[\mathcal{P}^*/\mathcal{P}^*\h]e_\lambda$ is supported
on $\cup_{\mu\leqslant^\theta \lambda} Y_\mu$. It follows that $[\mathcal{E}nd(\mathcal{P})/
\mathcal{E}nd(\mathcal{P})\h]e_\lambda$ and hence $\Delta^{loc}_p(\lambda)$ is supported on
the same subvariety. By the construction  of the standard objects in $\mathcal{O}_p(\lambda)$
(see \cite{BLPW}), what remains to check that is that $\Delta_p(\lambda)$ is the
indecomposable projective object marked by $\lambda$ in the full subcategory of $\OCat_p(\leqslant \lambda)\subset \OCat_p$ consisting of all objects supported on $\cup_{\mu\leqslant^\theta \lambda}Y_\mu$.

The proof is by induction on $\lambda$. Suppose we already know that $\Delta_p^{loc}(\mu)$ is a standard object in $\OCat_p$
marked by $\mu$, whenever $\mu<^\theta \lambda$. Then what needs to be checked is that $\Ext^i(\Delta_p^{loc}(\lambda),\Delta_p^{loc}(\mu))=0$, and $\dim \Hom(\Delta_p^{loc}(\lambda), \Delta_p^{loc}(\mu))=\delta_{\lambda\mu}$  for all $\mu\leqslant^\theta \lambda$.
But since we know that $R\Gamma$ is a derived equivalence and $R\Gamma(\Delta_p^{loc}(\lambda))=\Delta_p(\lambda)$,
it is enough to show the equalities above for $\Delta_p(\lambda), \Delta_p(\mu)$ instead of $\Delta_p^{loc}(\lambda), \Delta_p^{loc}(\mu)$.
This follows from the condition that $\leqslant^\theta$ is refined by some highest weight order for $O_p$.
\end{proof}

\begin{Cor}
The functors $\Gamma,\Loc$ preserve the subcategories of standardly filtered objects in $O_p,\OCat_p$
and are exact on these subcategories.
\end{Cor}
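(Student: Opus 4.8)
The plan is to prove both assertions by dévissage along standard filtrations, using the two facts just established: that $\Gamma$ and $\Loc$ exchange the standard objects, $\Gamma(\Delta_p^{loc}(\lambda))=\Delta_p(\lambda)$ and $\Loc(\Delta_p(\lambda))=\Delta_p^{loc}(\lambda)$, and that the higher derived functors vanish on standards, $R^i\Gamma(\Delta_p^{loc}(\lambda))=0$ for $i>0$ (Lemma \ref{Lem:Delta_prop}) and $L_i\Loc(\Delta_p(\lambda))=0$ for $i>0$ (immediate from $L\Loc(\Delta_p(\lambda))=\Delta_p^{loc}(\lambda)$ being concentrated in homological degree zero). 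The two functors are treated symmetrically, so I describe $\Gamma$ in detail and indicate the mirror argument for $\Loc$.

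For $\Gamma$, let $M\in\OCat_p$ carry a filtration $0=M_0\subset M_1\subset\cdots\subset M_k=M$ with $M_j/M_{j-1}\cong\Delta_p^{loc}(\lambda_j)$, and argue by induction on $k$ that $R^i\Gamma(M)=0$ for $i>0$ and that $\Gamma(M)$ is standardly filtered in $O_p$. Applying $R\Gamma$ to the short exact sequence $0\to M_{k-1}\to M_k\to\Delta_p^{loc}(\lambda_k)\to 0$ yields a long exact sequence; the inductive hypothesis $R^i\Gamma(M_{k-1})=0$ together with $R^i\Gamma(\Delta_p^{loc}(\lambda_k))=0$ forces $R^i\Gamma(M_k)=0$ for $i>0$. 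The surviving degree-zero portion is a short exact sequence $0\to\Gamma(M_{k-1})\to\Gamma(M_k)\to\Delta_p(\lambda_k)\to 0$, exhibiting $\Gamma(M_k)$ as an extension of a standard object by a standardly filtered object, hence standardly filtered (standardly filtered objects are closed under extensions in a highest weight category). Since $\Gamma$ is always left exact and $R^1\Gamma$ vanishes on every standardly filtered object, applying $\Gamma$ to any short exact sequence of standardly filtered objects preserves exactness; this is precisely exactness of $\Gamma$ on the subcategory.

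The argument for $\Loc$ is the mirror image. Now $\Loc$ is right exact, and one runs the same induction on a standard filtration of $N\in O_p$ with subquotients $\Delta_p(\mu_j)$, using the long exact sequence for $L\Loc$ applied to $0\to N_{k-1}\to N_k\to\Delta_p(\mu_k)\to 0$ together with $L_i\Loc(\Delta_p(\mu_k))=0$ for $i>0$. This gives $L_i\Loc(N)=0$ for $i>0$ and realizes $\Loc(N)$ as an extension by $\Delta_p^{loc}(\mu_k)$, hence standardly filtered; vanishing of $L_1\Loc$ on standardly filtered objects upgrades the right-exact $\Loc$ to an exact functor on the subcategory.

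I do not expect a serious obstacle here: the entire content of the statement has been pushed into Lemma \ref{Lem:Delta_prop} and Proposition \ref{Prop:stand}, and the corollary is a formal (co)homological consequence. The only points requiring slight care are the base of each induction and the closure of standardly filtered objects under extensions, which is a standard property of highest weight categories. As a byproduct, since $R\Gamma$ and $L\Loc$ are mutually inverse derived equivalences and reduce to the underived $\Gamma,\Loc$ on the two subcategories, $\Gamma$ and $\Loc$ restrict to mutually inverse equivalences between the standardly filtered objects of $\OCat_p$ and of $O_p$.
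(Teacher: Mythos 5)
Your proof is correct and follows exactly the route the paper intends: the corollary is stated as an immediate consequence of Lemma \ref{Lem:Delta_prop} and Proposition \ref{Prop:stand}, namely that $R\Gamma$ and $L\Loc$ are concentrated in degree zero on standard objects and exchange them, so the dévissage along standard filtrations you spell out (long exact sequences plus closure of standardly filtered objects under extensions) is precisely the implicit argument. Nothing to correct.
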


\subsection{Localization theorem for categories O}\label{S:cat_O}
\begin{Prop}
Under the assumptions in Proposition \ref{Prop:stand},
the functors $\Gamma: \OCat_p\leftrightarrows O_p:\Loc$ are mutually quasi-inverse
equivalences of highest weight categories.
\end{Prop}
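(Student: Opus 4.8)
The plan is to leverage the highest weight structure on both $O_p$ and $\OCat_p$ together with the results already established for standard objects. First I would note that by the Corollary immediately preceding this Proposition, the functors $\Gamma$ and $\Loc$ restrict to exact functors on the full subcategories of standardly filtered objects, and by Proposition \ref{Prop:stand} together with Lemma \ref{Lem:Delta_prop} we have $\Gamma(\Delta_p^{loc}(\lambda)) = \Delta_p(\lambda)$ and $\Loc(\Delta_p(\lambda)) = \Delta_p^{loc}(\lambda)$, matching standard objects to standard objects and respecting the labelling by $\operatorname{P}_\ell(n)$. Since $\Loc$ is left adjoint to $\Gamma$, the standard adjunction machinery shows that $\Loc$ is right exact and $\Gamma$ is left exact, and the agreement on standards gives the unit and counit of adjunction as isomorphisms on standardly filtered objects.

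Next I would promote this to a full equivalence of abelian categories. The key point is that a highest weight category is generated (under the exact structure, via projective covers built from standard filtrations) by its standardly filtered objects; in particular every projective object is standardly filtered. So I would argue that $\Loc$ sends a complete set of indecomposable projectives $P_p(\lambda)$ in $O_p$ to the indecomposable projectives in $\OCat_p$, and that $\Gamma$ sends them back, using that both functors are exact on standardly filtered objects and that the adjunction units/counits are isomorphisms there. Because the projectives generate each abelian category and the functors are mutually inverse on them while being exact enough on standardly filtered resolutions, one concludes that $\Gamma$ and $\Loc$ induce mutually quasi-inverse equivalences on the whole categories. The compatibility with standard objects then automatically makes this an equivalence of highest weight categories, since the orderings are matched by construction.

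The main obstacle I anticipate is verifying exactness of $\Gamma$ and $\Loc$ on all of $O_p$ and $\OCat_p$, not merely on standardly filtered objects. A priori $\Loc$ is only right exact and $\Gamma$ only left exact, so a direct argument would invoke the derived equivalence $R\Gamma, L\Loc$ from \cite{GL}: since these are already mutually inverse equivalences of derived categories and they restrict correctly on standard objects, one shows that the nonderived functors agree with the derived ones on the categories $O$ by a dimension-shift or resolution argument. Concretely, any object of $O_p$ admits a finite resolution by standardly filtered (in fact projective) objects, on which $\Gamma$ computes $R\Gamma$ and $\Loc$ computes $L\Loc$; hence the higher derived functors vanish on $O_p$ and $\OCat_p$, making $\Gamma$ and $\Loc$ exact. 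This reduces the equivalence of abelian categories to the already-known derived statement.

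Once exactness is in hand, the conclusion is formal: the unit $\operatorname{id} \to \Gamma \circ \Loc$ and counit $\Loc \circ \Gamma \to \operatorname{id}$ are isomorphisms on standard objects, hence on standardly filtered objects, hence (by exactness and the fact that every object of a highest weight category is a cokernel of a map between standardly filtered objects) on all objects. Therefore $\Gamma$ and $\Loc$ are mutually quasi-inverse equivalences, and since they intertwine the standard objects compatibly with the highest weight orders refined by the common order $\leqslant$, they are equivalences of highest weight categories.
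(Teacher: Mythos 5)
Your setup (matching of standards, exactness on standardly filtered objects, adjunction) agrees with the paper, but the proof has a genuine gap at its central step: you assert that $\Loc(P_p(\lambda))$ is projective in $\OCat_p$ "using that both functors are exact on standardly filtered objects and that the adjunction units/counits are isomorphisms there," and no such implication exists. Those facts tell you that $\Loc(P_p(\lambda))$ is standardly filtered and that $\Gamma\circ\Loc$ is the identity on it, but they say nothing about $\Ext^1(\Loc(P_p(\lambda)),N)$ for $N\in\OCat_p$. Note also that you cannot get this vanishing cheaply from the derived adjunction, since that would only reduce it to $\Ext^1$ computed in $H_p\operatorname{-mod}$, where $P_p(\lambda)$ is not projective. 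This is exactly the point where the paper does real work: since $\OCat_p$ is highest weight, projectivity of the standardly filtered object $\Loc(P_p(\lambda))$ reduces to $\Ext^1(\Loc(P_p(\lambda)),\Delta_p^{loc}(\mu))=0$ for all $\mu$; one embeds $\Delta_p^{loc}(\mu)$ into the indecomposable tilting $T_p^{loc}(\mu)$ with standardly filtered cokernel $C$, uses $\Ext^1(\Loc(P_p(\lambda)),T_p^{loc}(\mu))=0$ (standardly filtered against tilting), and then checks $\Hom(\Loc(P_p(\lambda)),T_p^{loc}(\mu))\twoheadrightarrow\Hom(\Loc(P_p(\lambda)),C)$ by adjunction: this becomes $\Hom(P_p(\lambda),\Gamma(T_p^{loc}(\mu)))\twoheadrightarrow\Hom(P_p(\lambda),\Gamma(C))$, which holds because $\Gamma$ is exact on standardly filtered objects and $P_p(\lambda)$ is projective in $O_p$. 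Your proposal contains no substitute for this tilting argument.

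Your fallback route, proving exactness of $\Gamma$ and $\Loc$ on all of $O_p$ and $\OCat_p$ from the derived equivalence, is also not valid as stated. It is true that $L\Loc(M)$ is computed by applying $\Loc$ to a finite projective resolution $P^\bullet\rightarrow M$, but nothing forces the complex $\Loc(P^\bullet)$ to have cohomology concentrated in degree zero: the syzygies of the resolution are not standardly filtered, so exactness of $\Loc$ on standardly filtered objects does not apply to the short exact sequences into which the resolution breaks, and the derived equivalence $R\Gamma, L\Loc$ by itself does not preserve the hearts of the t-structures --- that preservation is precisely the abelian localization statement you are trying to prove. So "hence the higher derived functors vanish on $O_p$ and $\OCat_p$" is a non sequitur, and the final formal paragraph, which presupposes this exactness, does not go through. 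The paper's proof deliberately avoids ever proving exactness on the whole categories directly; it deduces the equivalence from the projectivity of the objects $\Loc(P_p(\lambda))$ (whose classes span $K_0(\OCat_p)$, so that their sum is a projective generator), and the exactness of $\Gamma$ and $\Loc$ comes out only a posteriori.
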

\begin{proof}
As we have seen above, we have $\Gamma(\Delta^{loc}_p(\lambda))=\Delta_p(\lambda)$ and $\Loc(\Delta_p(\lambda))=\Delta_p^{loc}(\lambda)$.
Also we have seen that $\Gamma,\Loc$ are exact on the subcategories of standardly filtered objects. Finally, we know that $\Gamma$
is right adjoint to $\Loc$.

We claim that this implies that $\Gamma,\Loc$ are mutually inverse equivalences. To prove the claim, it is enough
to check that $\Loc(P_p(\lambda))$ is projective, where we write $P_p(\lambda)$ for the indecomposable projective in $O_p$
 marked by  $\lambda$ (the claims in the first paragraph imply that $\Gamma\circ \Loc$ is an isomorphism on standardly filtered objects).
Indeed, suppose that  $\Loc(P_p(\lambda))$ is projective for any $\lambda$. From the previous paragraph, it follows that 
the canonical morphism of functors $\operatorname{id}\rightarrow \Gamma\circ \Loc$ is the identity on standardly filtered objects. 
The classes of $\Loc(P_p(\lambda))$ span $K_0(\OCat_p)$. It follows that $\bigoplus_\lambda \Loc(P_p(\lambda))$ is a projective
generator of $\OCat_p$. So $\Gamma,\Loc$ induce  mutually quasiinverse equivalences between $O_p\operatorname{-proj}$
and $\mathcal{O}_p\operatorname{-proj}$. 
 
Since $\OCat_p$ is a highest weight category, the claim that $\Loc(P_p(\lambda))$ is projective is equivalent to $\operatorname{Ext}^1(\Loc(P_p(\lambda)), \Delta_p^{loc}(\mu))=0$ for all $\mu$. Include $\Delta_p^{loc}(\mu)$ into the indecomposable tilting $T_p^{loc}(\mu)$ marked by $\mu$ and let
$C$ stands for the cokernel. The object $\Loc(P_p(\lambda))$ is standardly filtered so $\operatorname{Ext}^1(\Loc(P_p(\lambda)), T^{loc}_p(\mu))=0$. So we just need to prove that $\Hom(\Loc(P_p(\lambda)), T_p(\lambda))\twoheadrightarrow \Hom(\Loc(P_p(\lambda)), C)$.
Since $\Gamma$ is exact on standardly filtered objects, we get $\Gamma(T_p^{loc}(\mu))\twoheadrightarrow
\Gamma(C)$. Since $P_p(\lambda)$ is projective and $\Gamma$ is right adjoint to $\Loc$, we get
$\Hom(\Loc(P_p(\lambda)), T^{loc}_p(\mu))=\Hom(P_p(\lambda), \Gamma(T_p^{loc}(\mu)))
\twoheadrightarrow \Hom(P_p(\lambda), \Gamma(C))=\Hom(\Loc(P_p(\lambda)),C)$, and we are done.
\end{proof}

\subsection{Proof of Theorem \ref{Thm:equiv}}\label{S:thm_proof}
At this point we know that the functors $\Gamma,\Loc$ are mutually quasiinverse equivalences on the categories $\mathcal{O}$.
In this section we deduce that they are mutually inverse equivalences between $\mathcal{H}_p\operatorname{-mod}$ and
$H_p\operatorname{-mod}$.  Consider the $\mathcal{A}_{p+\chi}$-$\mathcal{A}_p$-bimodule $\mathcal{A}_{p,\chi}$
that quantizes the line bundle $\mathcal{O}_\chi$, see, for example, \cite[Section 5]{GL} or \cite[Section 5.1]{BPW}
for a definition. Let $\mathcal{H}_{p,\chi}$ denote the $\mathcal{H}_{p+\chi}$-$\mathcal{H}_p$-bimodule
obtained from $\mathcal{A}_{p,\chi}$ by using the Morita equivalence. Let $H_{p,\chi}$ denote the global sections of
this bimodule, this is a $H_{p+\chi}$-$H_p$-bimodule.

Similarly to \cite[Section 5.3]{BPW}, the claim that $\Gamma:\mathcal{H}_p\operatorname{-mod}\rightarrow H_p\operatorname{-mod}$ is an equivalence, is equivalent to saying that the translation bimodules $H_{p,\chi}, H_{p+\chi,-\chi}$ define mutually inverse Morita equivalences for $\chi$
being a sufficiently dominant element of $\operatorname{Pic}(\M^\theta)$. Similarly, the localization theorem holds for categories
$\mathcal{O}$ if the tensor products with $H_{p,\chi}, H_{p+\chi,-\chi}$ define mutually inverse equivalences between
$O_p, O_{p+\chi}$. Set $B:=H_{p+\chi,-\chi}\otimes_{H_{p+\chi}}H_{p,\chi}$. This bimodule
has a natural homomorphism to $H_p$. We know that the induced homomorphism $B\otimes_{H_p} N\rightarrow N$ is an isomorphism for
all $N\in O_p$. We claim that this implies that $M\cong H_p$. A key step in the proof is the following result
that follows from  \cite[Theorem 2.2.2]{Ginzburg_irr}.

\begin{Lem}\label{Gizb_lem}
For any primitive ideal $J\subset H_p$, there is an irreducible module $N\in O_p$ such that $J$ coincides with
the annihilator of $N$.
\end{Lem}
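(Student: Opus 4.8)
The plan is to start from the definition of a primitive ideal and replace the defining simple module by one in $O_p$ without changing the annihilator. Write $J=\operatorname{Ann}_{H_p}(V)$ for an irreducible (a priori non-category-$O$) module $V$. A free preliminary reduction is to observe that the grading of $H_p$ by $\operatorname{ad}h$-eigenvalues (with $h$ the Euler element, $[h,x]=x$, $[h,y]=-y$, $[h,w]=0$) is inner: since $[h,J]\subseteq J$ for any two-sided ideal, $J$ is automatically a graded subspace, so $H_p/J$ is a graded algebra. This lets me work with graded modules and look for a simple object on which $h$ acts with eigenvalues bounded below, which is the combinatorial hallmark of $O_p$.

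The main geometric input concerns the associated variety. Give $V$ a good filtration for the PBW filtration on $H_p$ (so that $\operatorname{gr}H_p=\C[\h\oplus\h^*]\#W$) and let $\mathcal V(J)\subseteq\h\oplus\h^*$ be the zero locus of $\operatorname{gr}J$; this is a $W$-stable coisotropic closed subvariety. The content I would extract from \cite[Theorem 2.2.2]{Ginzburg_irr}, together with Gabber involutivity and the Poisson geometry of $(\h\oplus\h^*)/W$, is that for primitive $J$ the variety $\mathcal V(J)$ is the closure $\overline{\mathcal L}$ of a single symplectic leaf. The leaves of $(\h\oplus\h^*)/W$ are indexed by conjugacy classes of parabolic subgroups $W'=W_b$ (pointwise stabilizers of $b\in\h$); for $W=G(\ell,1,n)$ these are products of symmetric groups with a smaller cyclotomic group. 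Every leaf closure contains the image of $0$ and more generally meets the Lagrangian $(\h\oplus 0)/W$, which is exactly the locus carrying characteristic varieties of category $O_p$ modules, so there is no support-level obstruction to realizing $J$ inside $O_p$.

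To produce an actual module I would localize at a generic point of $\mathcal L$. Completing $H_p$ along $\mathcal L$ renders it Morita equivalent to the completion of the cyclotomic Cherednik algebra $H_{p'}(W')$ attached to $W'$, and under this correspondence $J$ matches a primitive ideal $J'$ of $H_{p'}(W')$ with $\mathcal V(J')=\{0\}$, i.e. the annihilator of a finite-dimensional simple $V'$. A finite-dimensional module lies trivially in $O_{p'}$, so applying the parabolic induction functor $\operatorname{Ind}_{W'}\colon O_{p'}\to O_p$ gives an object of $O_p$; passing to an irreducible subquotient $N$ whose support is generically $\mathcal L$ yields $\operatorname{Ann}(N)=J$.

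The main obstacle is exactly this last compatibility: matching $\mathcal V(J)$ to a leaf is clean, but one must show that $J$ itself, and not merely its associated variety, is recovered as $\operatorname{Ann}(N)$ for $N\in O_p$. This requires that completion along $\mathcal L$ be an exact, faithfully flat, annihilator-preserving localization and that parabolic induction and restriction interact with $\operatorname{Ann}(\cdot)$ as expected, which is precisely the technical heart provided by \cite[Theorem 2.2.2]{Ginzburg_irr}.
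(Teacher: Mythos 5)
The paper offers no argument for this lemma beyond the citation, because none is needed: \cite[Theorem 2.2.2]{Ginzburg_irr} is itself a general Duflo-type theorem. For an almost commutative algebra carrying a Hamiltonian (inner) $\C^\times$-action --- here $H_p$ with its Euler element $h$ --- it asserts directly that every primitive ideal is the annihilator of a simple module in the corresponding category of modules with locally finite $h$-action bounded below, which for $H_p$ is exactly $O_p$. So the paper's proof is a one-step specialization of Ginzburg's theorem, proved there by holonomicity techniques; no associated varieties of the given ideal, no completions, and no induction functors enter.

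Your proposal instead uses Ginzburg only for the irreducibility of $\mathcal{V}(J)$ and tries to rebuild the Duflo statement from leaf completions and parabolic induction, and it breaks down at exactly the step you flag. First, the attribution is wrong: Theorem 2.2.2 of \cite{Ginzburg_irr} says nothing about completions of $H_p$ along a leaf being ``annihilator-preserving,'' nor about $\operatorname{Ind}_{W'}$ interacting with annihilators; those tools belong to \cite{BE} and \cite{sraco}. Since the content you ascribe to Ginzburg's theorem at the end is, in effect, the statement being proved, your argument as written is circular. Second, even granting \cite{BE} and \cite{sraco}, your construction only matches associated varieties, and associated varieties do not separate primitive ideals: distinct primitive ideals can have the same $\overline{\mathcal{L}}$ (already all annihilators of the finitely many finite-dimensional simples have associated variety $\{0\}$), and distinct simples of $O_p$ can have the same support. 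Choosing a simple subquotient $N$ of $\operatorname{Ind}_{W'}(V')$ whose support meets the relevant stratum gives at best $\mathcal{V}(\operatorname{Ann}N)=\mathcal{V}(J)$; nothing in your sketch forces $J\cdot N=0$, let alone $\operatorname{Ann}(N)=J$. Closing this would require, for instance, injectivity of $J\mapsto J_\dagger$ on primitive ideals with fixed associated variety, together with compatibility of $\dagger$ with annihilators of simples in $O_p$ --- substantive theorems from \cite{sraco} that the proposal neither states nor proves. As written, the proposal does not establish the lemma.
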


First, we claim that $B\twoheadrightarrow H_p$. Indeed, otherwise the image is contained in some primitive ideal $J$.
Let $N\in O_p$ be a simple annihilated by that ideal. Then $B\otimes_{H_p}N\rightarrow N$ factors through
$J\otimes_{H_p}N\rightarrow N$ which is zero. So, indeed, $B\twoheadrightarrow H_p$.

Now let $K$ be the kernel of $B\twoheadrightarrow H_p$. The sequence $0\rightarrow K\otimes_{H_p}N\rightarrow B\otimes_{H_p}N
\rightarrow N\rightarrow 0$ is exact. It follows that $K\otimes_{H_p}N=0$ for all $N$ in the category $O_p$.
We claim that this implies $K=0$. Assume the converse. For the purposes of the proof of the claim, 
we may assume that $K$ is irreducible. Let $J$ be its right annihilator,
let $\mathcal{L}$ be the leaf in $\h\oplus \h^*/W$ that is dense in the associated variety of $H_p/J$, and let $\underline{W}$ be the corresponding parabolic subgroup. Let $N$ be a simple in $O_p$ whose annihilator is precisely $J$. It follows 
that the support of $L$ in $\h/W\subset \h\oplus \h^*/W$ lies in $\overline{\mathcal{L}}$ and intersects $\mathcal{L}$. 
Apply the  restriction functors $\bullet_{\dagger}$ corresponding to $\underline{W}$ from \cite{BE,sraco} to $N,K$, respectively.
Then we get (in the notation of \cite{sraco}) that 
$0=(K\otimes_{H_p}N)_{\dagger}=K_{\dagger}\otimes_{\underline{H}_p^+}N_{\dagger}$.
But $N_{\dagger}$ is a nonzero  $\underline{H}_p^+$-module annihilated
by $J_{\dagger}$ (and hence finite dimensional and semisimple since $J_{\dagger}$ is a $N_{W}(\underline{W})$-stable maximal ideal of finite codimension
by \cite{sraco}). Also $K_{\dagger}$ is finite dimensional and its right annihilator is precisely $J_{\dagger}$.
It follows that $K_{\dagger}\otimes_{\underline{H}_p^+}N_{\dagger}$ is nonzero, a contradiction.

So we get that $B=H_p$. Similarly, $H_{p,\chi}\otimes_{H_p}H_{p+\chi,-\chi}=H_{p+\chi}$, and we are done.

\subsection{Proof of Corollary \ref{Cor:equiv1}}\label{SS:proof_Cor1}
{\it Step 1: deformation of parameters}. We claim that there is a Zariski dense set of parameters $p'=(\kappa',h_0',\ldots, h_{\ell-1}')$
such that the orders $\leqslant^{p}$ and $\leqslant^{p'}$ coincide and $\kappa'-\kappa, h_i'-h_i\in \Z$.

Assume first that $\kappa\in \mathbb{Q}$.   Let us write $h_i$
as $\kappa s_i-i/\ell$. The condition that boxes $b=(x,y,i)$ and $b'=(x',y',i')$ are equivalent means
that $(s_i+x-y)-(s_{i'}+x'-y')\in \frac{1}{\kappa}\Z$, while the inequality $b<b'$ means that either
$(s_i+x-y)-(s_{i'}+x'-y')\in \frac{1}{\kappa}\Z_{>0}$ or  $s_i+x-y=s_{i'}+x'-y'$ and $i<i'$. Pick a
sufficiently large positive integer $M$ such that $(M-1)\kappa\in \Z$. Replace $\kappa$ with $\kappa'=M\kappa$. The condition $(s_i+y-x)-(s_{i'}+y'-x')\in \frac{1}{M\kappa}\Z$ is still true. Pick integers $m_0<m_1<\ldots<m_{\ell-1}\ll M$ and set $s_i':=s_i-\frac{m_i}{M\kappa}$.
Then set $p'=(\kappa, \kappa's_0', \kappa's_1'-1/\ell,\ldots, \kappa's_{\ell-1}'-(\ell-1)/\ell)$. It is clear
from the previous description that passing from $p$ to $p'$ does not change the ordering on boxes and hence
the orders $\leqslant^{p}$ and $\leqslant^{p'}$ are equivalent. Also the set of possible $p'$ is Zariski dense.

Now suppose $\kappa\not\in \mathbb{Q}$. Let $\kappa':=\kappa+n$ for  $n\in \Z$. Then let us split the indexes
$\{0,\ldots,\ell-1\}$ into equivalence classes: $i\sim j$ if $s_i-s_j\in \kappa^{-1}\Z$. We introduce new
parameters $s_i^0$ such that $\kappa'(s_i^0-s^0_j)=\kappa(s_i-s_j)$ for any pair of equivalent $i,j$.
Then take integers $m_0<m_1\ldots <m_{\ell-1}$ and set $s_i':=s_i^0-m_i/\kappa'$. Then again the order on boxes
is preserved and hence $\leqslant ^p$ and $\leqslant^{p'}$ are equivalent.


{\it Step 2: relation with the Morse function}.  If the parameter $p=(\kappa, h_0,\ldots,h_{\ell-1})$ has real entries, then 
we set $\theta_p=(-\kappa+ h_0-h_{\ell-1},h_1-h_0,\ldots,h_{\ell-1}-h_{\ell-2})$.
As was checked in \cite{Gordon}, when $\theta_p$ is generic, the value of $c_p(\lambda)$ coincides
with the image of $z_\lambda$ under the moment map for the action of $\mathbb{S}^1\subset T_1$ on $\M^{\theta_p}$.
From Morse theory, it follows that the order $\leqslant^p_c$ refines the order $\leqslant^{\theta_p}$.

Even if $p$ does not have real entries, it is easy to see that there is a parameter $\tilde{p}$ with real
entries such that $\leqslant^p, \leqslant^{\tilde{p}}$ are equivalent.
Let $\mathcal{F}$ be the set of all affine rational functions that vanish on $\kappa,h_0,\ldots,h_{\ell-1}$
(where we assume that $h_0+\ldots+h_{\ell-1}=0$, recall that these elements are defined up to a common summand).
If we have parameters $p,p'$ such that $\mathcal{F}=\mathcal{F}'$, then the orders coincide. Clearly, we can take
$p'$ with real entries such that $\mathcal{F}=\mathcal{F}'$.

{\it Step 3: proof of part (1)}. Take $\theta=\theta_{p'}$ for some $p'$ as in Step 1. By Step 2,
the assumptions of the theorem are satisfied, and we are done.

{\it Step 4: proof of part (2)}. Now suppose $p,p'$ are as in (2) of the corollary. As we have seen in Step 1,
there is $\theta$ such that the global section functor gives rise to highest weight equivalences
$\OCat_{p}\xrightarrow{\sim} O_p, \OCat_{p'}\xrightarrow{\sim} O_{p'}$. The categories
$\OCat_p$ and $\OCat_{p'}$ are naturally equivalent -- via a tensor product with a quantized
line bundle. By the construction, a resulting equivalence $O_p\xrightarrow{\sim}O_{p'}$ is
highest weight and preserves labels. The claim that it intertwines the KZ functors was essentially proved
in \cite[Section 9.4]{VV_proof}. This finishes the proof.

\section{Conjectural generalization}\label{S:open}
We use the notation from the introduction.
We assume that $X_0$ is a singular Poisson variety, $X$ is a smooth symplectic variety equipped with a Poisson resolution
of singularities $\rho:X\rightarrow X_0$. Moreover, we assume that the two-dimensional torus $T_1\times T_2$ acts on both $X,X_0$
such that
\begin{itemize}
\item $\rho$ is $T_1\times T_2$-equivariant.
\item $T_1$ contracts $X_0$ to a point, called $0$. Further, $T_1$ rescales the symplectic form $\omega$: $t.\omega=t^d \omega$ for some
integer $d>0$.
\item The action of $T_2$ on $X$ is Hamiltonian with finitely many fixed points.
\end{itemize}

As Kaledin proved in \cite{Kaledin}, there is a vector bundle $\mathcal{P}$ on $X$ with the following properties:
\begin{enumerate}
\item The algebra $\tilde{A}:=\operatorname{End}_{\mathcal{O}_X}(\mathcal{P})^{opp}$ has finite homological dimension.
\item $\operatorname{Ext}^i(\mathcal{P},\mathcal{P})=0$ for $i>0$.
\end{enumerate}
As a consequence $\operatorname{RHom}_{\mathcal{O}_X}(\mathcal{P},\bullet)$ defines a derived equivalence between
$D^b(\operatorname{Coh}(X))$ and $D^b(\tilde{A}\operatorname{-mod})$.
It can be shown that this bundle is $T_1\times T_2$-equivariant. Also Kaledin checked that, up to a Morita equivalence,
the algebra $\tilde{A}$ is independent of the choices of $X$ and of $\mathcal{P}$ (and so depends only on $X_0$).

Now we can quantize $\mathcal{P}$ to a right $\mathcal{A}_\lambda$-module $\mathcal{P}_\lambda$. Let $H_\lambda$ be the endomorphism algebra
of that right module. Again, the derived categories $D^b(\mathcal{A}_\lambda\operatorname{-mod})$ and $D^b(H_\lambda\operatorname{-mod})$ are equivalent. If $\mathcal{O}_X$ is a direct summand of
$\mathcal{P}$, then we have the quotient functor of the form $N\mapsto eN$ from $H_\lambda\operatorname{-mod}$
to $A_\lambda\operatorname{-mod}$. But the existence of such direct summand is not known in general.

It is not difficult to show that the action of $T_2$ on $H_\lambda$ is Hamiltonian. This allows to define
the category $O_\lambda$ for $H_\lambda$ and also Verma modules, compare, for example, with \cite{GL,BLPW}.

\begin{Conj}\label{Conj:1}
This category $O_\lambda$ is always highest weight and the Verma modules are precisely the standard objects.
\end{Conj}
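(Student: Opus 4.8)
The plan is to run the argument of Section~\ref{S:proof} in this generality. First I would pass to the sheaf side. Since $\mathcal{P}$ is a vector bundle, its quantization $\mathcal{P}_\lambda$ is a locally projective right $\mathcal{A}_\lambda$-module, so $\mathcal{H}_\lambda:=\mathcal{E}nd_{\mathcal{A}_\lambda}(\mathcal{P}_\lambda)$ is Morita equivalent to $\mathcal{A}_\lambda$ via $\mathcal{P}_\lambda$; this is an honest Morita equivalence of sheaves of algebras, even though on global sections it only yields a derived equivalence. Hence the geometric category $\OCat_\lambda$ for $\mathcal{H}_\lambda$ is defined and, by \cite{BLPW}, is highest weight, with simple and standard objects indexed by the fixed points $X^{T_2}$; write $\Delta^{loc}_\lambda$ for the standard object marked by $\lambda$, supported on $\overline{Y}_\lambda$, and let $\leqslant$ denote the geometric order. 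Combining this sheaf Morita equivalence with the derived equivalence $D^b(\mathcal{A}_\lambda\operatorname{-mod})\simeq D^b(H_\lambda\operatorname{-mod})$ recorded above, I obtain a derived equivalence $R\Gamma\colon D^b(\mathcal{H}_\lambda\operatorname{-mod})\xrightarrow{\sim}D^b(H_\lambda\operatorname{-mod})$, exactly as in \S\ref{SS:dereq}.

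Next I would set up Verma modules on both sides from the quantum comoment map $\Phi$ of the Hamiltonian $T_2$-action. Put $\Delta^{loc}_\lambda=\mathcal{H}_\lambda/\mathcal{H}_\lambda\mathfrak{m}$, where $\mathfrak{m}\subset\mathcal{H}_\lambda$ is the left ideal generated by the part of $\Phi$ of the appropriate sign (so that $\Delta^{loc}_\lambda$ is the analogue of $\mathcal{H}_p/\mathcal{H}_p\h$), and let $\Delta_\lambda$ be the Verma module for $H_\lambda$ induced along the $T_2$-grading, mirroring $\Delta_p=H_p/H_p\h$. In analogy with Lemma~\ref{Lem:Delta_prop} and Proposition~\ref{Prop:stand} I would then prove: (i) $\Delta^{loc}_\lambda$ is supported on $\overline{Y}_\lambda$ and is the standard object of $\OCat_\lambda$ marked by $\lambda$; and (ii) $R\Gamma(\Delta^{loc}_\lambda)=\Delta_\lambda$, concentrated in degree $0$, with $L\Loc(\Delta_\lambda)=\Delta^{loc}_\lambda$. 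Point (i) should follow from the Bialynicki--Birula description of the attracting cells $Y_\lambda$ together with a support estimate for the analogue of $\mathcal{P}^*/\mathcal{P}^*\h$ used in the proof of Proposition~\ref{Prop:stand}; point (ii) from higher cohomology vanishing on $\overline{Y}_\lambda$.

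Granting (i)--(ii), I would compare the algebraically defined category $O_\lambda$ with the geometric $\OCat_\lambda$: one checks that $\Gamma$ and $\Loc$ restrict to adjoint functors $\OCat_\lambda\rightleftarrows O_\lambda$, that $\Gamma$ is exact on standardly filtered objects, and that it carries $\Delta^{loc}_\lambda$ to $\Delta_\lambda$. The $\Hom$ and $\Ext$ groups among the $\Delta_\lambda$ then transport from the highest weight category $\OCat_\lambda$, so $\{\Delta_\lambda\}$ is a standardizable set for $\leqslant$ in $D^b(H_\lambda\operatorname{-mod})$, with $\operatorname{End}(\Delta_\lambda)=\C$ and the vanishing of $\Hom(\Delta_\lambda,\Delta_\mu)$ and $\Ext^{>0}(\Delta_\lambda,\Delta_\mu)$ dictated by $\leqslant$. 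Since $R\Gamma$ is a derived equivalence, exact on standardly filtered objects, that sends the standard objects of $\OCat_\lambda$ to the $\Delta_\lambda$, the usual recognition argument for highest weight categories then forces $O_\lambda$ to be highest weight with the Verma modules $\Delta_\lambda$ as standard objects.

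The main obstacle is the comparison in the last paragraph, together with point (ii). In the cyclotomic case both rest on features special to Cherednik algebras: the commutative subalgebra $S(\h^*)\subset H_p$ gives the clean description of $O_p$ as the $S(\h^*)$-finite, $\C^\times$-equivariant modules, and the explicit Procesi bundle, together with the flatness of $\mathcal{H}_p$ over $S(\h)$ from Lemma~\ref{Lem:Delta_prop}, makes the support computation and the cohomology vanishing tractable. For Kaledin's abstract tilting bundle $\mathcal{P}$, neither an analogue of $S(\h^*)$ nor an explicit model of $\mathcal{P}$ is available, so matching the two categories $\mathcal{O}$ -- in particular, showing that the Verma modules for $H_\lambda$ are exactly the global sections of the geometric standard objects -- is where the real work, and possibly some additional hypotheses on $(X,X_0)$, would be needed.
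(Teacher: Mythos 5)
The statement you are addressing is one of the two open conjectures of Section \ref{S:open}; the paper offers no proof of it (that section is explicitly speculative), so your proposal has to stand on its own --- and it does not: it is a reduction of the conjecture to statements that are at least as hard, as you yourself concede in your final paragraph. Your points (i) and (ii) --- the support estimate for $\Delta^{loc}_\lambda$ and the claim that $R\Gamma(\Delta^{loc}_\lambda)=\Delta_\lambda$ in degree $0$ --- are proved in the cyclotomic case using tools with no known analogue for Kaledin's abstract tilting bundle: the support bound comes from the Bezrukavnikov--Finkelberg results \cite{BF} on the Procesi bundle, and the degree-$0$ statement rests on flatness of $\mathcal{H}_p$ over $S(\h)$ (Lemma \ref{Lem:Delta_prop}, imported from \cite{VV_proof}). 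Labeling these ``the main obstacle'' does not close the gap; it relocates the entire content of the conjecture into it.

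There is also a structural flaw, independent of the missing lemmas. Your strategy derives the highest weight property of $O_\lambda$ by transporting it from $\OCat_\lambda$ through $\Gamma$, so it needs the localization to be exact (t-exact, not merely a derived equivalence) on categories $\mathcal{O}$ for the chosen resolution: a derived equivalence together with Hom and $\Ext$ vanishing among the $\Delta_\lambda$ inside $D^b(H_\lambda\operatorname{-mod})$ does not by itself equip the abelian category $O_\lambda$ with a highest weight structure having the $\Delta_\lambda$ as standards. But in the paper, exactness of $\Gamma$ on category $\mathcal{O}$ is the content of Proposition in \ref{S:cat_O} and Theorem \ref{Thm:equiv} (and, in the general setting, of Conjecture \ref{Conj:2}), and it requires the compatibility of the orders $\leqslant^\theta$ and $\leqslant^p$; it genuinely fails without such compatibility. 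Conjecture \ref{Conj:1}, by contrast, asserts the highest weight property \emph{always}, with no hypothesis relating $O_\lambda$ to any resolution; indeed it is logically prior to Conjecture \ref{Conj:2}, whose hypothesis (``a common highest weight order'') only makes sense once $O_\lambda$ is known to be highest weight. In the Cherednik case the analogue of Conjecture \ref{Conj:1} is the theorem of \cite{GGOR}, proved purely algebraically from the triangular decomposition of $H_p$ and the Euler element, prior to and independently of any localization theorem; the missing ingredient in the general setting is precisely such an algebraic input for $H_\lambda$, and your proposal does not supply a substitute. To rescue the transport strategy you would at least need to show that for every $\lambda$ there is a resolution with compatible orders (the analogue of Corollary \ref{Cor:equiv1}(1)), which is likewise unknown in this generality, so the argument as proposed cannot yield the unconditional statement.
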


As in the Cherednik case, we can establish a natural bijection between the labels in the categories $\mathcal{O}_\lambda$
and $O_\lambda$.

\begin{Conj}\label{Conj:2}
Suppose that one can take a common highest weight order for $O_\lambda$ and $\OCat_\lambda$.
Then the functor $\Hom_{\mathcal{A}_\lambda}(\mathcal{P}_\lambda,\bullet)$ is a highest weight equivalence
between $\OCat_\lambda$ and $O_\lambda$ and also an equivalence between $\mathcal{A}_\lambda\operatorname{-mod}$
and $H_\lambda\operatorname{-mod}$.
\end{Conj}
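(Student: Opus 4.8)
The plan is to reproduce, in the abstract setting of Section~\ref{S:open}, the three-step argument of Sections~\ref{S:stand}--\ref{S:thm_proof}. Write $\Gamma:=\Hom_{\mathcal{A}_\lambda}(\mathcal{P}_\lambda,\bullet)$ for the functor in the statement and let $\Loc:=\mathcal{P}_\lambda\otimes_{H_\lambda}\bullet$ be its left adjoint. By the derived equivalence between $D^b(\mathcal{A}_\lambda\operatorname{-mod})$ and $D^b(H_\lambda\operatorname{-mod})$ recorded above (which rests on Kaledin's vanishing $\Ext^i(\mathcal{P},\mathcal{P})=0$), the derived functors $R\Gamma,L\Loc$ are already mutually inverse. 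As in the cyclotomic case, the entire content is therefore to promote this derived statement to an abelian equivalence, first for the categories $\OCat_\lambda,O_\lambda$ and then for the full module categories.

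Granting Conjecture~\ref{Conj:1}, I would first compare the two highest weight structures. The standard objects of $\OCat_\lambda$ are the Braden--Licata--Proudfoot--Webster objects $\Delta^{loc}_\xi$, indexed by the $T_2$-fixed points $z_\xi$ and characterized by support on the union of contracting components $\overline{Y}_\eta$ with $\eta$ below $\xi$ in the geometric order; the standards of $O_\lambda$ are the Verma modules $\Delta_\xi$. The analog of Lemma~\ref{Lem:Delta_prop} together with Proposition~\ref{Prop:stand} is the assertion $R\Gamma(\Delta^{loc}_\xi)=\Delta_\xi$, with no higher cohomology, equivalently $L\Loc(\Delta_\xi)=\Delta^{loc}_\xi$. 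Given this identification, the induction of Proposition~\ref{Prop:stand} transfers verbatim: since $R\Gamma$ is a derived equivalence, the required vanishing $\Ext^i(\Delta^{loc}_\xi,\Delta^{loc}_\eta)=0$ and the normalization $\dim\Hom(\Delta^{loc}_\xi,\Delta^{loc}_\eta)=\delta_{\xi\eta}$ (for $\eta$ below $\xi$ geometrically) reduce to the same statements for the Vermas $\Delta_\xi,\Delta_\eta$, which hold precisely because the common highest weight order refines the geometric order.

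Once the standards match, Step~2 is formal and follows Section~\ref{S:cat_O} without change: as $\Gamma,\Loc$ are biadjoint and exact on standardly filtered objects, it suffices to show $\Loc(P_\xi)$ is projective for each indecomposable projective $P_\xi\in O_\lambda$, and this reduces to $\Ext^1(\Loc(P_\xi),\Delta^{loc}_\eta)=0$, checked by embedding $\Delta^{loc}_\eta$ into its indecomposable tilting, applying adjunction, and using exactness of $\Gamma$ on $\Delta$-filtered objects. This gives the highest weight equivalence $\OCat_\lambda\xrightarrow{\sim}O_\lambda$. For the full module categories I would then mimic Section~\ref{S:thm_proof}: set $B:=H_{\lambda+\chi,-\chi}\otimes_{H_{\lambda+\chi}}H_{\lambda,\chi}$ for the translation bimodules attached to a sufficiently dominant $\chi\in\operatorname{Pic}(X)$, deduce $B\otimes_{H_\lambda}N\cong N$ for all $N\in O_\lambda$ from the category~$\OCat$ equivalence at $\lambda$ and $\lambda+\chi$, and argue $B\cong H_\lambda$ (and symmetrically for the opposite composition).

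This is where the genuine obstacles lie, of two distinct kinds. The identification $R\Gamma(\Delta^{loc}_\xi)=\Delta_\xi$ of Step~1 rests, in the cyclotomic setting, on the explicit Bezrukavnikov--Finkelberg description of the Procesi bundle; for an abstract Kaledin bundle $\mathcal{P}$, produced only by an existence theorem, no such control is available, and supplying a replacement—a compatibility between the contracting geometry of $z_\xi$ and the $T_2$-grading that defines the Verma modules—is the crux. The bootstrap of Section~\ref{S:thm_proof} is the second obstacle: it uses that every primitive ideal is the annihilator of a simple in category~$O$ (Lemma~\ref{Gizb_lem}) together with restriction functors $\bullet_\dagger$ to ``parabolic'' subquotients, both special to Cherednik algebras; in the abstract case one would have to build an analogous theory of restriction functors from the symplectic slice geometry at the $T_2$-fixed points, and a matching description of primitive ideals via Harish-Chandra bimodules, before the argument killing the kernel of $B\twoheadrightarrow H_\lambda$ can run. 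I expect the Step~1 geometric input to be the decisive hurdle, since without it even the category~$\OCat$ equivalence is out of reach.
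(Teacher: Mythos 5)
The statement you were asked to prove is Conjecture~\ref{Conj:2}: the paper offers no proof of it, and Section~\ref{S:open} is explicitly speculative, so there is no argument of the paper to compare against. What you have written is an accurate transcription of the three-step scheme of Sections~\ref{S:stand}--\ref{S:thm_proof} into the abstract setting, together with an honest list of the points where that transcription breaks down --- but those points are precisely the mathematical content of the conjecture, and you leave them open. Concretely: (a)~your Step~1 needs $R\Gamma(\Delta^{loc}_\xi)=\Delta_\xi$, which in the cyclotomic case rests on the Bezrukavnikov--Finkelberg control of the Procesi bundle (the support statement for $[\mathcal{P}^*/\mathcal{P}^*\h]e_\lambda$ used in Proposition~\ref{Prop:stand}); Kaledin's bundle is produced by an existence theorem with no such support information, and you do not even have a definition of the Verma module $\Delta_\xi$ matched to a fixed point $z_\xi$ without first assuming Conjecture~\ref{Conj:1}, which is also open. (b)~Your Step~3 needs both Lemma~\ref{Gizb_lem} (every primitive ideal of $H_\lambda$ is the annihilator of a simple in $O_\lambda$) and the restriction functors $\bullet_\dagger$ of \cite{BE,sraco}, which are constructed from the structure of rational Cherednik algebras; no substitute is proposed. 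Flagging a gap does not fill it, so the proposal is a research plan, not a proof --- which is the correct status, since the statement is a conjecture.

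Two smaller corrections to the plan itself. You assert that $\Gamma$ and $\Loc$ are ``biadjoint''; they are not, and the argument of Section~\ref{S:cat_O} uses only that $\Gamma$ is right adjoint to $\Loc$, so you should phrase Step~2 accordingly. Also, in the abstract setting the translation bimodules $H_{\lambda,\chi}$ and the reduction of the module-category equivalence to their Morita invertibility for sufficiently dominant $\chi\in\operatorname{Pic}(X)$ is itself borrowed from \cite[Section 5.3]{BPW}; this part does generalize to conical symplectic resolutions, so it is reasonable to cite it, but it should be stated as an input rather than as ``formal.''
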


\end{document}